\title{\Large{Subsampling Methods for Persistent Homology}}
\author{
\vspace{-2em} \\
\begin{tabular}{l}
\textbf{Fr\'ed\'eric Chazal}, \texttt{frederic.chazal@inria.fr}
\\ \textbf{Brittany Terese Fasy}, \texttt{brittany.fasy@alumni.duke.edu}
\\ \textbf{Fabrizio Lecci}, \texttt{lecci@cmu.edu}
\\ \textbf{Bertrand Michel}, \texttt{bertrand.michel@upmc.fr}
\\ \textbf{Alessandro Rinaldo}, \texttt{arinaldo@stat.cmu.edu}
\\ \textbf{Larry Wasserman}, \texttt{larry@stat.cmu.edu} 
\end{tabular}
\vspace{-1em}
}
\newtheorem{theorem}{Theorem}
\newtheorem{lemma}[theorem]{Lemma}
\newtheorem{corollary}[theorem]{Corollary}
\newtheorem{proposition}[theorem]{Proposition}
\newtheorem{remark}[theorem]{Remark}
\theoremstyle{definition}
\newtheorem{definition}[theorem]{Definition}
\newcommand{\figref}[1]{Figure~\ref{#1}}
\newcommand{\thmref}[1]{Theorem~\ref{#1}}
\algnewcommand\algorithmicinput{\textbf{INPUT:}}
\algnewcommand\INPUT{\item[\algorithmicinput]}
\algnewcommand\algorithmicoutput{\textbf{OUTPUT:}}
\algnewcommand\OUTPUT{\item[\algorithmicoutput]}
\newcommand{\E}{\mbox{$\mathbb{E}$}}
\newcommand{\R}{\mbox{$\mathbb{R}$}}
\newcommand{\X}{\mbox{$\mathbb{X}$}}
\newcommand{\Z}{\mbox{$\mathbb{Z}$}}
\newcommand{\K}{\mathcal{C}}  
\newcommand{\Cech}{\operatorname{\mathrm{Cech}}}
\newcommand{\Rips}{\operatorname{\mathrm{Rips}}}
\newcommand{\Filt}{\operatorname{\mathrm{Filt}}}
\newcommand{\metricfont}{\mathrm}
\newcommand{\bottle}{\metricfont{d_b}}
\newcommand{\dgh}{\metricfont{d_{\textrm{\tiny GH}}}}
\newcommand{\dhaus}{H}
\newcommand{\dgm}{D} 
\newcommand{\dgmspace}{\mathcal{D}_T} 
\newcommand{\lscape}{\lambda} 
\newcommand{\lscapespace}{\mathcal{L}_T} 
\let\hat\widehat
\let\tilde\widetilde
\begin{document}

\maketitle
\begin{abstract}
Persistent homology is a multiscale method for analyzing the shape of sets and
functions from point cloud data arising from an unknown distribution supported
on those sets.
When the size of the sample is large, direct computation of the persistent
homology is prohibitive due to the combinatorial nature of the existing algorithms.
We propose to compute the persistent homology of several subsamples
of the data and then combine the resulting estimates. 
We study the risk of two estimators 
and we prove that the subsampling approach carries stable
topological information while achieving a great reduction in computational
complexity.
\end{abstract}

\section{Introduction}

Topological Data Analysis (TDA) refers to a
collection of methods for finding topological structure in data
\citep{carlsson2009topology}.
The input is a dataset drawn from a probability measure
supported on
an unknown low-dimensional set
$\mathbb{X}$. The
output is a collection of data summaries that are used to estimate the
topological features of $\mathbb{X}$.


One approach to TDA is persistent homology
\citep{edelsbrunner2010computational}, which is a method for studying the
topology at
multiple scales simultaneously.
For example,
let $A$ be a set and let
$f(x) = \inf_{y\in A}||x-y||$
be the {\em distance function.}
The lower-level sets $\{x: f(x) \leq t \}$ change as $t$ increases from
$-\infty$ to $\infty$.
Persistent homology summarizes the evolution of
$\{x: f(x) \leq t \}$ as a function of $t$.
In particular, the {\em persistence diagram}
represents the birth and death time of each topological feature
as a point in the plane.
Thanks to stability properties \citep{cohen2007stability, chazal2009proximity,
chazal2012structure, cso-psgc-12}, persistence diagrams provide relevant
multi-scale topological information about the data; see
Section \ref{sec:background}.
The persistence diagram can be converted into a summary function
called a {\rm landscape} \citep{bubenik2012statistical}.

\textbf{Contribution and Related Work.}
The time and space complexity
of persistent homology algorithms is one of the main obstacles in applying TDA
techniques to high-dimensional problems.
To overcome the problem of computational costs, we propose the following
strategy:
given a large point cloud, take several subsamples, compute the landscape for
each subsample
and then combine the information.
More precisely, let $\lambda$ be a random persistence landscape from
$\Psi_{\mu}^m$, a measure on the space of landscape functions induced by a
sample of size $m$ from a metric measure space $(\mathbb{X}, \rho, \mu)$. We
show that the average landscape is stable with respect to perturbations of the
underlying measure $\mu$ in the Wasserstein metric; see~\thmref{thm:stabW}.
The empirical counterpart of the average landscape is
$\overline{\lambda_n^m}= \frac{1}{n} \sum_{i=1}^n \lambda_i,$
where $\lambda_1, \dots, \lambda_n \sim \Psi_{\mu}^m$. The empirical average
landscape can be used as an unbiased estimator of
$\mathbb{E}_{\Psi_\mu^m}[\lambda]$ and as a biased estimator of
$\lambda_{\mathbb{X}_\mu}$, the computationally expensive persistence landscape
associated to the support of the measure $\mu$. Unlike
$\lambda_{\mathbb{X}_\mu}$,
the estimator $\overline{\lambda_n^m}$ is robust to the presence of
outliers.
In the same spirit, we propose a different estimator constructed by choosing a
sample of $m$ points of $\mathbb{X}$ as close as possible to $\mathbb{X}_\mu$,
and then computing its persistent homology to
approximate~$\lambda_{\mathbb{X}_\mu}$. See
Section \ref{sec:subsampling} for more details.

Closely related to our approach, the distribution of persistence diagrams
associated to subsamples of fixed size has also been proposed in
\cite{blumberg2012persistent}.
There, the authors show that the distribution of persistence diagrams associated
to subsamples
of fixed size is stable with respect to perturbations of the underlying measure
in the Gromov-Prohorov
metric. Though similar in spirit, our approach relies on different techniques
and, in particular,
leads to easily computable summaries of the persistent homology of a given
space. 
These summaries are particularly useful when the exact computation of the persistent homology 
is unfeasbile, as in the case of large point clouds.

\textbf{Software.}
We plan to release the R package \textsf{\textbf{persistence}},
which provides efficient algorithms for the computation of persistent homology
from \textsf{\textbf{Dionysus}} and \textsf{\textbf{GUDHI}}, and makes them
available with the user-friendly R interface.
\textsf{\textbf{Dionysus}}\footnote{http://www.mrzv.org/software/dionysus/} is a
C++ library written and maintained by Dmitriy Morozov;
\textsf{\textbf{GUDHI}}\footnote{https://project.inria.fr/gudhi/} is
new born project hosted by INRIA and whose goal is the development and
implementation of new algorithms for geometric understanding in high dimensions.
Preliminary results show that \textsf{\textbf{GUDHI}} outperforms its major
competitors; see \cite{boissonnat2013compressed}.
Our package includes a series of tools for the statistical analysis of
persistent homology, including the methods described in
\cite{Fasy2013statistical}, \cite{chazal2013stochastic}, and
this paper.


\textbf{Outline.}
Background on persistent homology is presented in Section
\ref{sec:background}. Our approach is introduced in
Section \ref{sec:subsampling}, with a formal definition of the estimators briefly described in this
introduction.  Section \ref{sec:average} contains the stability result
of the average landscape.  Section \ref{sec:risk} is devoted to the
risk analysis of the proposed estimators.  In Section
\ref{sec:experiments}, we apply our methods to two examples.  We
conclude with some remarks in Section \ref{sec:conclusion} and defer
proofs and technical details to the appendices.

\section{Background}
\label{sec:background}

Computing persistent homology requires building
a nested sequence of geometric complexes indexed by a real parameter.
In this section, we briefly introduce these families and topological summaries
of them, but refer the reader
to Section 4.2. of~\cite{cso-psgc-12} for a complete definition of these
geometric filtered complexes and their use in TDA, to
\cite{edelsbrunner2010computational} for the definition of persistence
diagrams, and to \cite{bubenik2012statistical} for the definition and properties
of
persistence landscapes.

\subsection{Geometric Complexes}
\begin{figure}[ht!b!]
\centering
\includegraphics[height=2cm]{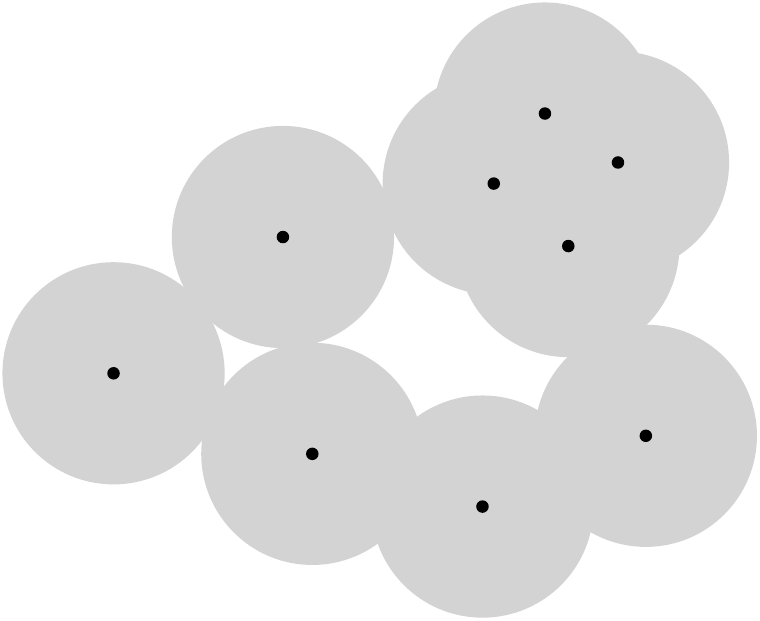}\ \
\includegraphics[height=2cm]{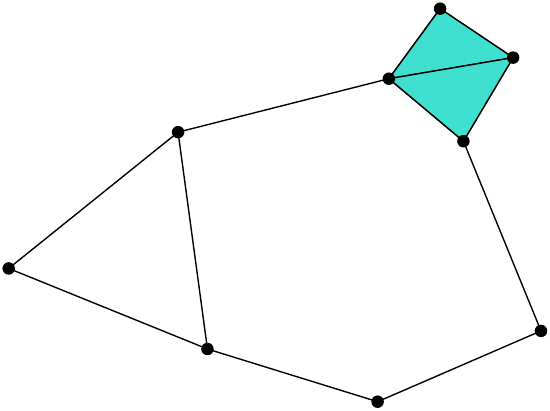}\ \
\includegraphics[height=2cm]{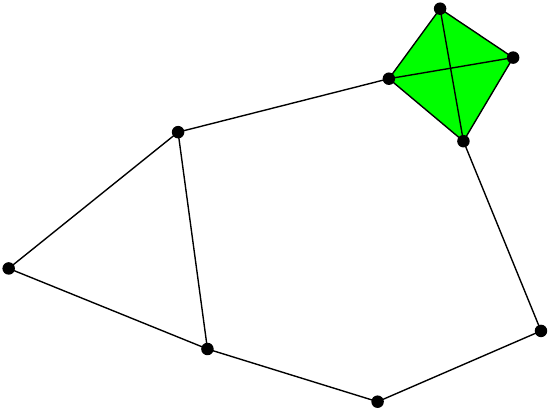}\ \
\includegraphics[height=2cm]{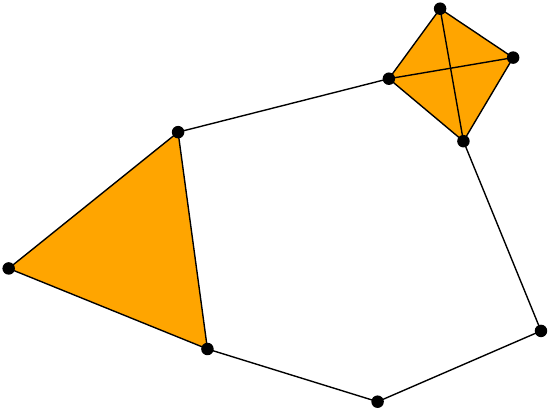}
\caption{From left to right: the $\alpha$ sublevelset of the distance function
to a point
  set $\X$ in $\R^2$, the $\alpha$-complex, $\Cech_\alpha(\X)$, and
$\Rips_{2\alpha}(\X)$.
The last two complexes include a tetrahedron.}
\label{fig:complex}
\end{figure}

To compute the persistent homology from a set of data,
we need to construct a set of structures called simplicial complexes.
A simplicial complex $\K$
is a set of simplices (points, segments, triangles, etc) such that any face
from a simplex in $\K$ is also in $\K$ and the intersection of any two
simplices of $\K$ is a (possibly empty) face of these simplices.

Given a metric space $\X$, we define three simplicial complexes whose vertex set is $\X$;
see Figure~\ref{fig:complex} for illustrations.  The
\emph{Vietoris-Rips complex} $\Rips_{\alpha}(\X)$ is the set of
simplices~$[x_0,\ldots,x_k]$ such that $d_{\X}(x_i,x_j)\leq \alpha$ for all
$(i,j)$.
The \emph{\v Cech complex} $\Cech_\alpha(\X)$ is similarly defined as
the set of simplices $[x_0,\ldots,x_k]$ such that there exists a point $x \in
\X$ for which $d_{\X}(x,x_i)\leq \alpha$ for all $i$.
Note that these two complexes are related by
$\Rips_\alpha(\X)\subseteq\Cech_\alpha(\X)\subseteq\Rips_{2\alpha}(\X)$ and that their definition does not require $\X$ to be finite. 
When $\X \subset \R^d$,  we also define the \emph{$\alpha$-complex} as the set of simplices $[x_0,\ldots,x_k]$ such that there exists a ball of radius at most $\alpha$ containing $ x_0,\ldots,x_k$ on its boundary and whose interior does not intersect $\X$.


Each family described above is non-decreasing with
$\alpha$: for any $\alpha\leq\beta$, there is an inclusion of
$\Rips_\alpha(\X)$ in $\Rips_\beta(\X)$, and similarly for the \v Cech
and Alpha complexes. These sequences of inclusions are called
\emph{filtrations}.
In the following, we let $\Filt(\X) := (\Filt_\alpha (\X))_{\alpha
\in \mathcal A}$  denote  a filtration corresponding to one of the
parameterized complexes defined above.

\subsection{Persistence Diagrams}
The topology of $\Filt_\alpha (\X)$ changes as $\alpha$ increases: new connected components can appear, existing connected components can merge, cycles and cavities can appear or be filled, etc.
Persistent homology tracks these changes, identifies \emph{features} and associates an \emph{interval} or \emph{lifetime} (from $b$ to $d$) to them. For instance, a connected component is a feature that is born at the smallest $\alpha$ such that the component is present in $\Filt_\alpha (\X)$, and dies when it merges with an older connected component. 
Intuitively, the longer a feature persists, the more relevant it is.
The lifetime of a feature can be represented as a point in the plane with coordinates $(b,d)$.
The obtained set of points (with multiplicity) is called the {\em persistence diagram} $\dgm(\Filt(\X))$ (and we will abuse terminology slightly by denoting it $D_{\X}$). Note that
the diagram is entirely contained in the half-plane above the diagonal
$\Delta$ defined by $y=x$, since death always occurs after birth.
\cite{chazal2012structure} shows that this diagram is still well-defined under very weak hypotheses,
and in particular $\dgm(\Filt(\X))$ is well-defined for any
compact metric space $\X$ \cite{cso-psgc-12}.
The most persistent features (supposedly the most important) are those represented by the points furthest from the diagonal in the diagram, whereas points close to the diagonal can be interpreted as (topological) noise.

To avoid (minor) technical difficulties, we restrict our attention to
diagrams $D$ such that $(b,d) \in [0,T] \times [0,T]$ for all $(b,d) \in D$,
for some fixed $T>0$. Note that, in our setting, $D_{\X}$ is in  $\mathcal{D}_T$ as soon as $T$ is larger than the diameter of $\X$.
We denote by $\mathcal{D}_T$ the space of all such (restricted) persistence~diagrams and we endow it with a metric called the
\emph{bottleneck distance} $\bottle$. Given two persistence diagrams, it is
defined as the infimum of the $\delta$ for which we can find a matching between
the diagrams, such that two points can only be matched if their distance is less
than $\delta$ and all points at distance more than $\delta$ from the diagonal
must be matched.

A fundamental property of persistence diagrams, proven in
\cite{chazal2012structure}, is their \emph{stability}.
Recall that the Hausdorff distance between two compact subsets $X,Y$ of a metric space $(\mathbb{X},\rho)$ is
$\displaystyle
\dhaus(X,Y)= \max \Big\{ \max_{x \in X} \min_{y \in Y} \rho(x,y) , \;  \max_{y
\in Y}
\min_{x \in X} \rho(x,y) \Big\}.
$
If $\X$ and $\tilde \X$
are two compact metric spaces, then one has
\begin{equation} \label{bot-dgh}
\bottle(
\dgm_{\X},
\dgm_{\tilde \X}
)
~\leq~
2 \dgh ( \X, \tilde \X ),
\end{equation}
where $\dgh  (\X, \tilde \X )$ denotes the Gromov-Hausdorff distance,
i.e., the
infimum Hausdorff distance between $\X$ and $\tilde \X$
over all possible isometric embeddings into a common metric space.
If $\X$ and $\tilde \X$ are already embedded in the same metric space then (\ref{bot-dgh}) holds for $\dhaus(\cdot,
\cdot)$ in place of $2\dgh(\cdot,\cdot)$.


\subsection{Persistence Landscapes}

The persistence landscape, introduced in \cite{bubenik2012statistical},
is a collection of continuous, piecewise linear
functions~\mbox{$\lscape \colon  \Z^{+} \times \R \to \R$} that summarizes a
persistence diagram.
To define the landscape, consider the set of functions created by tenting
each each point $p=(x,y)= \left( \frac{b+d}{2}, \frac{d-b}{2}  \right)$
representing a birth-death pair $(b,d) \in D$ as follows:
\begin{equation}\label{eq:triangle}
 \Lambda_p(t) =
 \begin{cases}
  t-x+y & t \in [x-y, x] \\
  x+y-t & t \in (x,  x+y] \\
  0 & \text{otherwise}
 \end{cases}
 =
 \begin{cases}
  t-b & t \in [b, \frac{b+d}{2}] \\
  d-t & t \in (\frac{b+d}{2}, d] \\
  0 & \text{otherwise}.
 \end{cases}
\end{equation}

\begin{wrapfigure}{L}{0.5\textwidth}
\centering
	\includegraphics[height=1.5in]{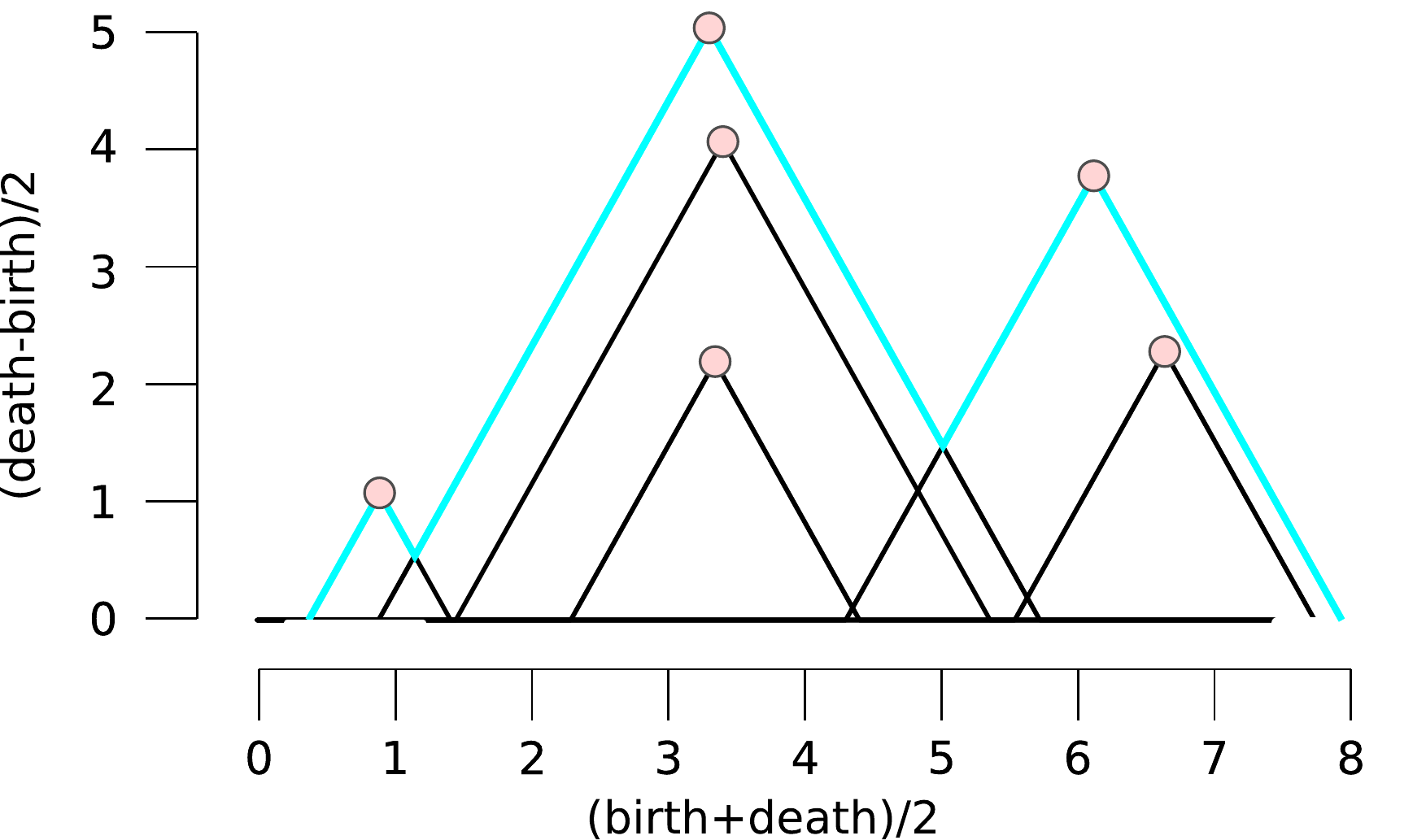}\ \
\caption{
	We use the rotated axes to represent a persistence diagram $D$.
	A feature $(b,d) \in D$ is represented by the point
	$(\frac{b+d}{2},\frac{d-b}{2})$ (pink).
	In words,
	the $x$-coordinate is the average parameter value over which the feature
exists,
	and the $y$-coordinate is the half-life of the feature.
        The cyan curve is the landscape $\lscape(1,\cdot)$.}
\label{fig:landscape}
 \vspace{-15pt}
\end{wrapfigure}

We obtain
an arrangement of piecewise linear curves by overlaying the graphs of
the functions~\mbox{$\{ \Lambda_p \}_{p}$}; see~\figref{fig:landscape}.
The persistence landscape of $\dgm$ is a  summary of this arrangement.
Formally, the persistence landscape of $D$ is the collection of functions
\begin{equation}\label{eq:landscape}
 \lscape_{\dgm}(k,t) = \underset{p}{\text{kmax}} ~ \Lambda_p(t), \quad
 t \in [0,T], k \in \mathbb{N},
\end{equation}
where kmax is the $k$th largest value in the set; in particular,
$1$max is the usual maximum~function. We set $\lscape_{\dgm}(k,t) = 0$ if the
set $\{ \Lambda_p(t)\}_p$ contains less than
$k$ points.  For simplicity of exposition, if $D_{\X}$ is the persistence
diagram of some metric space $\X$, then we use
$\lambda_{\X}$ to denote $\lambda_{D_{\X}}$.

We denote by $\lscapespace$ the space
of persistence landscapes corresponding to
$\dgmspace$.
From the definition of persistence landscape, we immediately observe that
$\lscape_{\dgm}(k,\cdot)$ is one-Lipschitz.
The following additional properties are proven in \cite{bubenik2012statistical}.

\begin{lemma} \label{lemma:basic-prop}
Let $D, D'$ be persistence diagrams. We have the following for any $t\in
\mathbb{R}$ and any $k \in \mathbb{N}$:\\
(i)  $\lambda_D(k,t)
~\geq~ \lambda_D(k+1,t) ~\geq~ 0 $.\\
(ii) $| \lambda_D(k,t) - \lambda_{D'}(k,t)| ~\leq~  \bottle(D,D')$.
\end{lemma}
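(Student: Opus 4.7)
Part (i) is essentially a restatement of the definition. For any real $t$, the set $\{\Lambda_p(t)\}_{p\in D}$ consists of nonnegative numbers (each $\Lambda_p$ is a tent function bounded below by $0$), and by convention we extend this multiset with zeros so that the $k$-th maximum is always defined. The $k$-th largest element of any such set is at least the $(k+1)$-th largest, and at least $0$ since all entries are nonnegative; this gives both inequalities at once.

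For part (ii), the plan is to reduce everything to a pointwise Lipschitz statement via a near-optimal matching. Fix $t\in\R$, $k\in\N$, and $\epsilon>0$. By definition of $\bottle$, there is a partial matching $M$ between $D$ and $D'$ such that every matched pair $(p,q)\in M$ satisfies $\|p-q\|_\infty \leq \bottle(D,D')+\epsilon$, and every unmatched point $p$ of $D$ or $D'$ lies within $\bottle(D,D')+\epsilon$ of the diagonal $\Delta$. Set $\delta := \bottle(D,D')+\epsilon$.

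The key lemma is a pointwise bound: for any two birth-death points $p=(b,d)$ and $q=(b',d')$,
\[
|\Lambda_p(t) - \Lambda_q(t)| \;\leq\; \|p-q\|_\infty.
\]
This I would check directly from the formula $\Lambda_p(t) = \max\{0,\min(t-b,d-t)\}$, using that both $\max(\cdot,0)$ and $\min(\cdot,\cdot)$ are $1$-Lipschitz in each argument with respect to the $\ell^\infty$ norm on $(b,d)$. As an immediate corollary, if $p\in D$ is unmatched, then its $\ell^\infty$-distance to some diagonal point $(c,c)$ is at most $\delta$, and $\Lambda_{(c,c)}(t)\equiv 0$, so $\Lambda_p(t)\leq\delta$; the same holds for unmatched points of $D'$.

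Now I would lift the matching $M$ to a full bijection between two padded multisets: append infinitely many copies of ``diagonal points'' (with $\Lambda\equiv 0$) to each of $D$ and $D'$ so that matched pairs in $M$ stay paired and every unmatched original point is paired with a diagonal point on the other side. By the pointwise bound above, along each pair of this bijection the values of $\Lambda_{(\cdot)}(t)$ differ by at most $\delta$. Since sorting is $1$-Lipschitz in $\ell^\infty$ for sequences related by a bijection with pointwise distortion $\delta$, the $k$-th largest values of $\{\Lambda_p(t)\}_{p\in D}$ (padded with zeros) and of $\{\Lambda_q(t)\}_{q\in D'}$ (padded with zeros) differ by at most $\delta$. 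Recalling that these are exactly $\lscape_D(k,t)$ and $\lscape_{D'}(k,t)$ by definition, and letting $\epsilon\downarrow 0$, yields $|\lscape_D(k,t)-\lscape_{D'}(k,t)|\leq \bottle(D,D')$.

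The main obstacle is the bookkeeping in the last step: one must be careful that padding with zeros is legitimate for the $k$-th maximum and that the bijection-plus-pointwise-bound really implies the sorted sequences are close. Everything else (the pointwise $1$-Lipschitz bound on $\Lambda_p$ and the diagonal estimate for unmatched points) is a short direct computation from the piecewise-linear formula in~\eqref{eq:triangle}.
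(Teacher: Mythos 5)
Your proof is correct. Note that the paper does not prove this lemma at all --- it simply quotes it from \cite{bubenik2012statistical} --- so there is no in-paper argument to compare against; your proposal reconstructs essentially the standard proof from that reference: (i) is immediate from the definition of kmax on a multiset of nonnegative tent values, and (ii) follows from the $1$-Lipschitz bound $|\Lambda_p(t)-\Lambda_q(t)|\leq\|p-q\|_\infty$, the observation that unmatched points have tent height at most their distance to the diagonal, and the fact that a bijection with pointwise distortion $\delta$ perturbs each order statistic by at most $\delta$. The only detail worth making explicit is the last of these (if at least $k$ of the $a_i$ exceed $v$, then at least $k$ of the matched $b_i$ exceed $v-\delta$, and symmetrically), which you correctly flag as the main bookkeeping step.
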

For ease of exposition, we focus on the case $k=1$,
and set $\lscape_D(t) = \lscape_{\dgm}(1,t)$.  However,
the results we present hold for~$k > 1$.  In fact, the results hold for more
general summaries of persistence landscapes, including the silhouette defined
in \cite{chazal2013stochastic}. 

\section{The Multiple Samples Approach}
\label{sec:subsampling}

Let $(\mathbb{X}, \rho)$ be a metric space of diameter at most $T/2$ and let
$\mathcal{P}(\mathbb{X})$ be the space of probability measures on $\mathbb{X}$,
such that, for any measure $\mu \in \mathcal{P}(\mathbb{X})$, its support
$\mathbb{X}_\mu$ is a compact set.
The space $\mathbb{X}_\mu$ is a natural object of interest in computational
topology. Its
persistent homology is usually approximated by the persistent homology
of the distance function to a
sample $X_N=\{x_1, \dots, x_N \} \subset \mathbb{X}_\mu$.
\cite{Fasy2013statistical} propose several methods for the construction of
confidence sets for the persistence diagram of $\mathbb{X}_\mu$, while
\cite{chazal2013optimal} establish optimal convergence rates for $d_b(
D_{\mathbb{X}_\mu}, D_{X_N} )$.

When $N$ is too large, the computation of the persistent homology of $X_N$ is
prohibitive, due to the combinatorial complexity of the computation. Our aim
is to study topological signatures of the data that can be efficiently computed
in a reasonable time. We define such quantities by repeatedly sampling $m$
points of $\mathbb{X}$ according to $\mu$.

For any positive integer $m$, let  $X = \{ x_1,\cdots , x_m \}
\subset \mathbb{X}$ be a sample of $m$ points from the measure $\mu \in
\mathcal{P}(\mathbb{X})$.
The corresponding persistence landscape is $\lambda_X$
and we denote by $\Psi_\mu^{m}$ the measure induced by $\mu^{\otimes
m}$ on $\mathcal{L_T}$.
Note that the persistence landscape $\lambda_X$  can
be seen as a single draw from the measure $\Psi_\mu^{m}$.
We consider the point-wise expectations of the (random) persistence landscape
under this
measure:
$\mathbb{E}_{\Psi_\mu^{m}}[\lambda_{X}(t)], t \in [0,T]$.
This quantity is relevant from a topological point of view, because
it is stable under perturbation of the underlying
measure $\mu$.
This stability result is the main result of the paper, presented
in detail in the next section.

The average landscape $\mathbb{E}_{\Psi_\mu^{m}}[\lambda_{X}]$ has a natural
empirical counterpart, which can be used as
its unbiased estimator. Let $S_1^m, \dots, S_n^m$ be $n$ independent samples of
size $m$ from $\mu$. We define the empirical average landscape as
\begin{equation}
\label{eq:avLand}
\overline{\lambda_n^m}(t)= \frac{1}{n} \sum_{i=1}^n \lambda_{S_i^m}(t), \quad
\text{ for all } t \in [0,T],
\end{equation}
and propose to  use
$\overline{\lambda_n^m}$ to estimate $\lambda_{{\mathbb{X}_\mu}}$.
The variance of this estimator under the $\ell_\infty$-distance was studied in
detail in \cite{chazal2013stochastic}. Here instead we are concerned with the
quantity  $\Vert
\lambda_{\mathbb{X}_\mu} -
\mathbb{E}_{\Psi_\mu^{m}}[\lambda_{X}] \Vert_{\infty}$, which can be seen as the
bias component (see Section \ref{sec:risk}).

In addition to the average,
we also consider using the \textit{closest sample} to
$\mathbb{X}_\mu$ in Hausdorff distance.
The closest sample method consists in choosing a sample of $m$ points of
$\mathbb{X}$, as close as possible to $\mathbb{X}_\mu$, and then use this sample
to build a landscape that approximates $\lambda_{{\mathbb{X}_\mu}}$.
Let $S_1^m, \dots, S_n^m$ be $n$ independent samples of size $m$ from
$\mu^{\otimes
m}$.
The closest sample is
\begin{equation}
\label{eq:closestSample}
\hat{C^m_n}= \arg\min_{S \in \{S_1^m, \dots, S_n^m\}} H(S, X_\mu)
\end{equation}
and the corresponding landscape function is
$\hat{\lambda_n^m} = \lambda_{{\hat{C^m_n}}}. $
Of course, the method requires the support of $\mu$ to be a known quantity.

\begin{remark}
Computing the persistent
homology of $X_N$ is $O(\exp(N))$, whereas computing
the average landscape is $O(n \exp(m))$ and the
persistent homology of the closest sample is $O(n m N + \exp(m) )$.
\end{remark}

\begin{remark}
The general framework described above is valid for the case in which
$\mu$ is a discrete measure with support $\mathbb{X}_\mu=\{x_1, \dots, x_N \}
\subset \mathbb{R}^D $.
For example, the following situation is very common in practice.  Let
$X_N=\{x_1, \dots, x_N\}$ be a given point cloud, for large but fixed $N \in
\mathbb{R}$. When $N$ is large, the computation of the persistent homology of
$X_N$ is unfeasible. Instead, we consider the discrete uniform measure $\mu$ that puts mass $1/N$ on each point of
$X_N$, and we propose to
estimate $\lambda_{{X_N}}=\lambda_{\mathbb{X}_u}$ by repeatedly subsampling $m \ll N$ points of $X_N$
according to $\mu$.
\end{remark}

We will study the
$\ell_\infty$-risk of the proposed estimators, $\mathbb{E}\left[ \Vert
\lambda_{\mathbb{X}_\mu} - \overline{\lambda_n^m} \Vert_{\infty} \right]$ and
$\mathbb{E}\left[ \Vert \lambda_{\mathbb{X}_\mu} - \hat{\lambda_n^m}
\Vert_{\infty} \right]$, under the following assumption on the underlying
measure $\mu$, which we will refer to as the {\it  $(a,b, r_0)$-standard
assumption}: there exist positive constants $a$, $b$ and $r_0 \geq 0$ such
that
\begin{equation}
\label{eq:ab}
\forall r>r_0 , \, \forall  x \in \mathbb{X}_\mu\ ,  \  \mu(B(x,r)) \geq 1\wedge
 a  r^b .
\end{equation}
For $r_0=0$, this is known as the $(a,b)$-standard assumption and has been
widely used in the literature of set estimation under Hausdorff distance
\citep{cuevas2004boundary,Cuevas09, singh2009adaptive} and more recently in the
statistical analysis of persistence diagrams \citep{chazal2013optimal,
Fasy2013statistical}.
We use the generalized version with $r_0 > 0$ to take into account the case in
which $\mu$ is a discrete measure (in which case $r_0$ depends on $N$); see Appendix \ref{sec:abtrunc} for more
details.

\section{Stability of the Average Landscape}
\label{sec:average}

Consider the framework described in Section \ref{sec:subsampling}: $m$ points
are repeatedly sampled from the space~$\mathbb{X}$ according to a measure $\mu
\in \mathcal{P}(\mathbb{X})$.
In this section, we show that the average landscape
$\mathbb{E}_{\Psi_\mu^{m}}[\lambda_{X}]$ is an interesting quantity on its own,
since it carries some stable topological information about the underlying
measure $\mu$, from which the data are generated.

\cite{chazal2013stochastic} provide a way to construct confidence bands for
$\mathbb{E}_{\Psi_\mu^m}[\lambda_X]$. Here, 
we compare the average landscapes corresponding to
two measures that are close to each other in the Wasserstein
metric.
\begin{definition}
Given a metric space $(\mathbb{X}, \rho)$, the $p$th Wasserstein distance
between two measures $\mu,\nu \in \mathcal{P}(\mathbb{X})$ is
$W_{\rho,p}(\mu,\nu)= \left( \inf_\Pi \int_{\mathbb{X}\times\mathbb{X}}
[\rho(x,y)]^p d\Pi(x,y)  \right)^{\frac{1}{p}}$,
where the infimum is taken over all measures on $\mathbb{X}\times\mathbb{X}$
with marginals $\mu$ and $\nu$.
\end{definition}
The Wasserstein distance is often colloquially referred to as the earth-movers
distance, as $\Pi$ can be seen as a transport plan.
The following result shows that the average behavior of the landscapes of sets
of $m$ points sampled according to any measure $\mu$ is stable with respect to
the Wasserstein distance. 
\begin{theorem} \label{thm:stabW}
Let $(\mathbb{X}, \rho)$ be a metric space of diameter bounded by $T/2$.
Let $X \sim \mu^{\otimes m}$ and $Y \sim \nu^{\otimes m}$, where $\mu, \nu \in \mathcal{P}(\mathbb{X})$ are two probability measures.
For any $p \geq 1$ we have
\begin{equation*}
\left\Vert \mathbb{E}_{\Psi_\mu^m}[\lambda_X] - \mathbb{E}_{\Psi_\nu^m}[\lambda_Y]
\right\Vert_\infty \leq  m^{\frac{1}{p}} W_{\rho,p}(\mu,\nu).
\end{equation*}
\end{theorem}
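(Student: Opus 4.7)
The plan is to build an explicit coupling between $X$ and $Y$ and chain three well-known inequalities (Lipschitz stability of landscapes, Hausdorff stability of diagrams, and Jensen applied to an $L^p$ cost), so that the optimal transport plan between $\mu$ and $\nu$ is the only nontrivial ingredient.

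First I would fix $t \in [0,T]$ and a transport plan $\Pi \in \mathcal{P}(\mathbb{X} \times \mathbb{X})$ with marginals $\mu$ and $\nu$, and draw $(x_i, y_i) \sim \Pi$ independently for $i=1,\dots,m$. Then $X = \{x_1,\dots,x_m\} \sim \mu^{\otimes m}$ and $Y = \{y_1,\dots,y_m\} \sim \nu^{\otimes m}$, so the expectations on the two sides of the claimed inequality can be computed against this joint law. Applying Lemma~\ref{lemma:basic-prop}(ii) pointwise gives $|\lambda_X(t) - \lambda_Y(t)| \leq \bottle(D_X, D_Y)$; since $X$ and $Y$ are already embedded in the common metric space $(\mathbb{X},\rho)$, the remark following \eqref{bot-dgh} yields $\bottle(D_X, D_Y) \leq \dhaus(X, Y)$. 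Finally, because $y_i$ is a candidate nearest neighbor of $x_i$ in $Y$ and vice versa, $\dhaus(X, Y) \leq \max_{i} \rho(x_i, y_i) \leq \bigl(\sum_{i=1}^m \rho(x_i,y_i)^p\bigr)^{1/p}$.

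Taking expectation under the coupling and then applying Jensen's inequality to the concave map $u \mapsto u^{1/p}$, I get
\begin{equation*}
\bigl|\mathbb{E}_{\Psi_\mu^m}[\lambda_X(t)] - \mathbb{E}_{\Psi_\nu^m}[\lambda_Y(t)]\bigr|
\leq \mathbb{E}\Bigl[\bigl(\textstyle\sum_{i=1}^m \rho(x_i,y_i)^p\bigr)^{1/p}\Bigr]
\leq \Bigl(\textstyle\sum_{i=1}^m \mathbb{E}_\Pi[\rho(x,y)^p]\Bigr)^{1/p}
= m^{1/p}\Bigl(\int \rho^p\, d\Pi\Bigr)^{1/p}.
\end{equation*}
Since this holds for every transport plan $\Pi$, taking the infimum over $\Pi$ replaces the last factor by $W_{\rho,p}(\mu,\nu)$; and since the left-hand side is independent of $t$, taking the supremum over $t$ yields the $\ell_\infty$ bound.

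The only place that requires any care is the first step: one must verify that the coupling of individual pairs lifts to the product level correctly, i.e., that independent draws from $\Pi$ give the right marginal product laws $\mu^{\otimes m}$ and $\nu^{\otimes m}$, so the expectations inside and outside the coupling agree. Everything else is a direct concatenation of Lemma~\ref{lemma:basic-prop}(ii), the Hausdorff form of \eqref{bot-dgh}, the trivial bound of Hausdorff by the coordinatewise maximum, and Jensen. There is no subtle analytic obstacle; the content of the bound is the fact that topological stability is expressed through Hausdorff distance, which is dominated by an $\ell_\infty$ cost and hence by any $\ell_p$ cost up to the factor $m^{1/p}$.
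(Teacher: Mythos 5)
Your proof is correct and follows essentially the same route as the paper's: an explicit product coupling $\Pi^{\otimes m}$, followed by landscape stability (Lemma~\ref{lemma:basic-prop}(ii)), bottleneck-versus-Hausdorff stability, the bound of $\dhaus$ by the $\ell_p$ cost, and Jensen. The paper merely packages these steps as three separate lemmas phrased as Wasserstein inequalities between the pushforward measures on $\mathbb{X}^m$, $\mathcal{D}_T$, and $\mathcal{L}_T$, but the mathematical content is identical.
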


For measures that are not defined on the same metric space, the inequality of
Theorem \ref{thm:stabW} can be extended to Gromov-Wasserstein metric:
$\displaystyle
\left\Vert \mathbb{E}_{\Psi_\mu^m}[\lambda_X] - \mathbb{E}_{\Psi_\nu^m}[\lambda_Y]
\right\Vert_\infty \leq  2 m^{\frac{1}{p}} GW_{\rho,p}(\mu,\nu).
$

The result of Theorem \ref{thm:stabW} is useful for two reasons. First, it tells
us that for a fixed $m$, the expected ``topological behavior'' of a set of $m$
points carries some stable information about the underlying measure from which
the data are generated. Second, it provides a lower bound for the Wasserstein
distance between two measures, based on the topological signature of samples of
$m$ points.

The dependence on $m$ of the upper bound of Theorem \ref{thm:stabW} seems to be
necessary in this setting: intuitively, when $m$ grows, the samples of $m$
points converge to the support of $\mu$ and $\nu$ w.r.t. the Hausdorff distance
respectively. Therefore the expected landscapes should converge to the
landscapes of the support of the measures. But, in general, two measures that
are close in the Wasserstein metric can have support that have very different
and unrelated topologies.
Indeed, a similar dependence was also obtained in
\cite{blumberg2012persistent} when analyzing the stability properties of
persistent diagrams in the Gromov-Prohorov metric.

Note that in Theorem \ref{thm:stabW} we do not make any assumption on the
measures $\mu$ and $\nu$. If we assume that they both satisfy the $(a,b,r_0)$-standard
assumption we can provide a different  bound on the difference of the expected
landscapes, based on the Hausdorff distance between the support of the two
measures.

\begin{theorem} \label{thm:stabH}
Let $(\mathbb{X}, \rho)$ be a metric space of diameter bounded by $T/2$.
Let $X \sim \mu^{\otimes m}$ and $Y \sim \nu^{\otimes m}$, where
$\mu, \nu \in \mathcal{P}(\mathbb{X})$ satisfy the $(a,b,r_0)$-standard assumption on $\mathbb{X}$. 
Define $r_m=2\left(\frac{\log m}{am} \right)^{1/b}$. Then
$$
\Vert \mathbb{E}_{\Psi_\mu^{m}}(\lambda_X) - \mathbb{E}_{\Psi_\nu^{m}}(\lambda_Y)
\Vert_\infty \leq
H(\mathbb{X}_\mu, \mathbb{X}_\nu) + 2r_0 \, + \, 2r_m \mathbbm{1}_{(r_0,\infty)}( r_m) \, + \,2 \, C_1(a,b) \, r_m \, \frac{1}{(\log
m)^2},
$$
where $C_1(a,b)$ is a constant depending on $a$ and $b$.
\end{theorem}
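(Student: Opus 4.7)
The plan is to insert the deterministic landscapes of the supports, $\lambda_{\mathbb{X}_\mu}$ and $\lambda_{\mathbb{X}_\nu}$, as intermediate points and reduce the statement to a bound on the expected Hausdorff distance between a random sample and its support. For any fixed $t \in [0,T]$, Jensen's inequality applied to the pointwise Lipschitz property of Lemma~\ref{lemma:basic-prop}(ii), together with the bottleneck-Hausdorff bound from~\eqref{bot-dgh} (in its embedded form, since $X, \mathbb{X}_\mu, \mathbb{X}_\nu, Y$ all sit in $(\mathbb{X}, \rho)$), gives
\begin{align*}
\bigl|\mathbb{E}_{\Psi_\mu^m}\lambda_X(t) - \mathbb{E}_{\Psi_\nu^m}\lambda_Y(t)\bigr|
&\leq \mathbb{E}|\lambda_X(t) - \lambda_{\mathbb{X}_\mu}(t)| + |\lambda_{\mathbb{X}_\mu}(t) - \lambda_{\mathbb{X}_\nu}(t)| + \mathbb{E}|\lambda_Y(t) - \lambda_{\mathbb{X}_\nu}(t)| \\
&\leq \mathbb{E}\,H(X, \mathbb{X}_\mu) + H(\mathbb{X}_\mu, \mathbb{X}_\nu) + \mathbb{E}\,H(Y, \mathbb{X}_\nu).
\end{align*}
Taking the supremum over $t$ reduces the theorem to showing $\mathbb{E}\,H(X, \mathbb{X}_\mu) \leq r_0 + r_m \mathbbm{1}_{(r_0,\infty)}(r_m) + C_1(a,b)\,r_m/(\log m)^2$, together with its symmetric version for $Y$.

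For this expected Hausdorff bound I would apply a classical packing/covering argument. Let $\{x_1,\dots,x_N\}$ be a maximal $(r/2)$-packing of $\mathbb{X}_\mu$; it is then automatically an $(r/2)$-cover, and whenever $r/4 > r_0$ the disjoint balls $B(x_i, r/4)$ each carry $\mu$-mass at least $a(r/4)^b$ by the $(a,b,r_0)$-standard assumption, forcing $N \leq 4^b/(a\,r^b)$. On the event $\{H(X,\mathbb{X}_\mu) > r\}$ some ball $B(x_i, r/2)$ must contain no sample point, so for $r/2 > r_0$ with $a(r/2)^b \leq 1$,
$$P\bigl(H(X,\mathbb{X}_\mu) > r\bigr) \;\leq\; N\bigl(1 - a(r/2)^b\bigr)^m \;\leq\; \frac{4^b}{a\,r^b}\,\exp\!\bigl(-a\,m\,(r/2)^b\bigr).$$
Using $\mathbb{E}\,H(X,\mathbb{X}_\mu) = \int_0^{T/2} P(H > r)\,dr$, I would bound the integrand by $1$ on the deterministic range $[0, r_0 + r_m\mathbbm{1}_{(r_0,\infty)}(r_m)]$ and by the displayed exponential tail on the remainder. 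The calibration $a\,m\,(r_m/2)^b = \log m$ baked into $r_m = 2(\log m/(am))^{1/b}$ makes the exponential equal to $1/m$ at $r = r_m$; substituting $s = r/r_m$ turns the tail integral into $r_m \int_1^\infty \frac{4^b}{a\,r_m^b\,s^b}\, m^{-s^b}\,ds$, and using $r_m^b = 2^b\log m/(am)$ together with a routine estimate of $\int_1^\infty s^{-b} m^{-s^b}\,ds$ of order $1/\log m$ produces the claimed $C_1(a,b)\,r_m/(\log m)^2$ contribution.

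The main obstacle is the careful treatment of the sub-$r_0$ regime, where the $(a,b,r_0)$-standard assumption is vacuous: covering numbers cannot be controlled at scale $\leq r_0$, so one must accept the deterministic $r_0$ contribution and split the integration domain in a way compatible with the indicator $\mathbbm{1}_{(r_0,\infty)}(r_m)$. Everything else is bookkeeping: summing the two symmetric expected Hausdorff bounds and the support term $H(\mathbb{X}_\mu, \mathbb{X}_\nu)$ produces the three doubled terms in the stated inequality, with $C_1(a,b)$ tracking the $4^b/a$ prefactor from the covering bound together with the $a/2^b$ factor from the change of variables.
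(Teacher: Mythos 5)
Your proof is correct and reaches the stated bound, but by a genuinely different decomposition from the paper's. The paper bounds $\Vert \mathbb{E}_{\Psi_\mu^m}[\lambda_X] - \mathbb{E}_{\Psi_\nu^m}[\lambda_Y]\Vert_\infty$ by the tail integral $\int_{\varepsilon>0}\mathbb{P}\left( \Vert\lambda_X-\lambda_Y\Vert_\infty>\varepsilon \right)d\varepsilon$, splits it at $\varepsilon_0=H(\mathbb{X}_\mu,\mathbb{X}_\nu)$, observes that the event $\{\Vert\lambda_X-\lambda_Y\Vert_\infty>\varepsilon\}$ forces $H(X,\mathbb{X}_\mu)$ or $H(Y,\mathbb{X}_\nu)$ to exceed $(\varepsilon-\varepsilon_0)/2$, and then invokes Lemma~\ref{lem:covering} for $\mathbb{E}[H(X,\mathbb{X}_\mu)]$. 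You instead insert the support landscapes $\lambda_{\mathbb{X}_\mu}$ and $\lambda_{\mathbb{X}_\nu}$ as intermediate points pointwise in $t$ and apply stability to each of the three gaps separately, which reduces the theorem to the same core estimate on $\mathbb{E}[H(X,\mathbb{X}_\mu)]$ (which you re-derive rather than cite). Your route is arguably cleaner: the paper's case analysis (``we assume the first condition holds, the other case follows similarly'') really needs a union bound to produce both the $X$- and $Y$-contributions, whereas your triangle inequality yields both terms transparently and accounts exactly for the factor $2$ in front of $r_0$, $r_m\mathbbm{1}_{(r_0,\infty)}(r_m)$ and $C_1(a,b)\,r_m/(\log m)^2$.

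Two bookkeeping caveats. First, your packing argument applies the $(a,b,r_0)$-standard assumption to balls of radius $r/4$, so the covering bound is only available for $r>4r_0$, not $r>r_0$; this inflates the deterministic contribution to a constant multiple of $r_0$, which is harmless for the statement but should be said explicitly (the paper's Lemma~\ref{lem:covering} quietly has the same issue). Second, your claim that $\int_1^\infty s^{-b}m^{-s^b}ds$ is of order $1/\log m$ is off by a factor of $m$: substituting $u=s^b$ shows the integral is $O\!\left(\frac{1}{m\log m}\right)$, and it is precisely this extra $1/m$ that cancels the factor $\frac{2^b m}{\log m}$ coming from $\frac{4^b}{a\,r_m^b}$ and delivers the claimed $C_1(a,b)\,r_m\,\frac{1}{(\log m)^2}$. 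With that correction the computation closes as you describe.
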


The following result follows by combining theorems \ref{thm:stabW} and
\ref{thm:stabH}.
\begin{corollary}
Under the same assumptions of Theorem \ref{thm:stabH} we have that
\begin{align*}
\left\Vert \mathbb{E}_{\Psi_\mu^{m} }(\lambda_X) - \mathbb{E}_{\Psi_\nu^{m}
}(\lambda_Y) \right\Vert_\infty  \leq
\min &
\Big\{  m^{\frac{1}{p}} W_p(\mu,\nu),  \\
H(\mathbb{X}_\mu, \mathbb{X}_\nu) + &2r_0 \, + \, 2r_m \mathbbm{1}_{(r_0,\infty)}( r_m) \, + \,2 \, C_1(a,b) \, r_m \, \frac{1}{(\log
m)^2}
\Big\}.
\end{align*}

\end{corollary}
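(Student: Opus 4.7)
The plan is to observe that this corollary is an immediate consequence of combining the two preceding theorems, so the only real task is to verify that both theorems apply simultaneously under the hypotheses of Theorem \ref{thm:stabH}.

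First I would note that Theorem \ref{thm:stabW} makes no assumption on the measures $\mu$ and $\nu$ beyond being elements of $\mathcal{P}(\mathbb{X})$ on a metric space of diameter bounded by $T/2$. Since the $(a,b,r_0)$-standard assumption is an additional condition on top of this baseline setup, the hypotheses of Theorem \ref{thm:stabH} automatically entail those of Theorem \ref{thm:stabW}. Thus I may invoke Theorem \ref{thm:stabW} to obtain, for any $p \geq 1$,
\begin{equation*}
\left\Vert \mathbb{E}_{\Psi_\mu^m}[\lambda_X] - \mathbb{E}_{\Psi_\nu^m}[\lambda_Y] \right\Vert_\infty \leq m^{\frac{1}{p}} W_{\rho,p}(\mu,\nu).
\end{equation*}

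Next, under the standing $(a,b,r_0)$-standard assumption, Theorem \ref{thm:stabH} applies directly and yields
\begin{equation*}
\left\Vert \mathbb{E}_{\Psi_\mu^m}[\lambda_X] - \mathbb{E}_{\Psi_\nu^m}[\lambda_Y] \right\Vert_\infty \leq H(\mathbb{X}_\mu, \mathbb{X}_\nu) + 2r_0 + 2r_m \mathbbm{1}_{(r_0,\infty)}(r_m) + 2\, C_1(a,b)\, r_m\, \frac{1}{(\log m)^2}.
\end{equation*}
Since the left-hand side is a single fixed quantity bounded above by each of the two right-hand sides, it is bounded above by their minimum, which is exactly the stated inequality.

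There is effectively no obstacle in this proof: it is purely a matter of checking that the hypothesis set of Theorem \ref{thm:stabH} is strictly stronger than that of Theorem \ref{thm:stabW}, and then taking the pointwise minimum of the two bounds. The only mild subtlety worth flagging is that the bound in Theorem \ref{thm:stabW} holds for every $p \geq 1$ (so in the corollary one could in principle also take an infimum over $p$ inside the minimum), while the second bound does not involve $p$; this asymmetry simply means the first term in the minimum is understood for whatever $p \geq 1$ has been fixed.
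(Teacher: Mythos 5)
Your proof is correct and follows exactly the paper's route: the corollary is obtained by combining Theorems \ref{thm:stabW} and \ref{thm:stabH} and taking the minimum of the two bounds, after noting that the hypotheses of Theorem \ref{thm:stabH} subsume those of Theorem \ref{thm:stabW}. Your remark about the freedom in the choice of $p\geq 1$ is a reasonable clarification but does not change the argument.
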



%
%

\section{Risk Analysis}
\label{sec:risk}
In this section we study the performance of
the average landscape $\overline{\lambda_n^m}$ and of the landscape of the closest sample $\hat{\lambda_n^m}$, as estimators of $\lambda_{{\mathbb{X}_\mu}}$.
We start by decomposing the $\ell_\infty$-risk of the average landscape as
follows. Set $\lambda_1 = \lambda_{S_1^m}$, with $S_1^m$ a sample of size $m$ from
$\mu$. Then, 
\begin{equation}
\E    \left\Vert \lambda_{\mathbb{X}_\mu}   -  \overline{\lambda_n^m}
\right\Vert_{\infty}  \leq  \left\Vert \lambda_{\mathbb{X}_\mu}  - \E  \lambda_1
 \right\Vert_{\infty} +  \E   \left\Vert \overline{\lambda_n^m} - \E \lambda_1
\right\Vert_{\infty},
\label{eq:risk}
\end{equation}
where the expectation of $\overline{\lambda_n^m}$ is wrt
$(\Psi_{\mu}^m)^{\otimes n}$ and the expectation of $\lambda_1$ is wrt $\Psi^m_{\mu}$.

For the bias term $\left\Vert \lambda_{\mathbb{X}_\mu} - \E  \lambda_1
\right\Vert_{\infty} $ we use the stability property to go back into $\R^d$ :
\begin{equation}
\left\Vert \lambda_{\mathbb{X}_\mu}  - \E  \lambda_1  \right\Vert_{\infty} 
\leq  \E_{\Psi_{\mu}^m}   \left\Vert \lambda_{\mathbb{X}_\mu}  -  \lambda_1
\right\Vert_{\infty}  \leq
\E_{\mu^{\otimes m} }  H( \mathbb{X}_\mu  , X ),
\label{eq:hausdorff}
\end{equation}
where $X$ is a sample of size $m$ from $\mu$.
Note that, if calculating $H( \mathbb{X}_\mu  , X)$ is computationally feasible,
then, in practice, $\E_{\mu^{\otimes m} }  H( \mathbb{X}_\mu  , X )$ can be
approximated by the average of a large number $B$ of values of $H (
\mathbb{X}_\mu  , X)$, for $B$ different draws of subsamples $X \sim
\mu^{\otimes m}$. 

To give an explicit bound on the bias we assume that $\mu$ satisfies the
$(a,b,r_0)$-standard assumption. 

\begin{theorem} 
\label{th:riskAverage}
Let $r_m=2\left(\frac{\log m}{am} \right)^{1/b}$. If $\mu$ satisfies the $(a,b,r_0)$-standard assumption, then
$$
\left\Vert \lambda_{\mathbb{X}_\mu}  - \E  \lambda_1  \right\Vert_{\infty} \leq r_0 \, + \,  r_m \, \mathbbm{1}_{(r_0,\infty)}( r_m) \, + \, C_1(a,b)  \, r_m \, \frac{1}{(\log m)^2},
$$
where $C_1(a,b)$ is a constant that depends on $a$ and $b$.
\end{theorem}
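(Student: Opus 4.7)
The plan is to start from the bias bound in equation~(\ref{eq:hausdorff}) of the excerpt, which reduces the claim to an upper bound on $\mathbb{E}_{\mu^{\otimes m}} H(\mathbb{X}_\mu, X)$, the expected Hausdorff distance between the support of $\mu$ and an i.i.d.\ sample $X$ of size $m$ from $\mu$. Since $X \subset \mathbb{X}_\mu$ almost surely, only the one-sided deviation $\sup_{y \in \mathbb{X}_\mu} \inf_{x \in X} \rho(y,x)$ needs to be controlled, and the entire remaining argument is a classical expected-covering-radius computation.

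The first step is a covering-plus-union-bound. For a candidate scale $\delta > 0$, I would choose a maximal $\delta/2$-separated subset $\{y_1,\dots,y_N\} \subset \mathbb{X}_\mu$, which is automatically a $\delta/2$-cover. If $H(\mathbb{X}_\mu, X) > \delta$, then at least one ball $B(y_i,\delta/2)$ contains no sample, so
\[
\mathbb{P}\bigl(H(\mathbb{X}_\mu, X) > \delta\bigr) \;\leq\; N \cdot \max_i \bigl(1 - \mu(B(y_i,\delta/2))\bigr)^m.
\]
For $\delta$ large enough relative to $r_0$ (specifically $\delta/4 > r_0$), the $(a,b,r_0)$-standard assumption yields $\mu(B(y_i,\delta/2)) \geq 1 \wedge a(\delta/2)^b$, while the disjointness of the balls $B(y_i,\delta/4)$ together with the same assumption at radius $\delta/4$ gives $N \leq 4^b/(a\delta^b)$. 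Combining these with $(1-x)^m \leq e^{-mx}$ produces the tail estimate $\mathbb{P}(H > \delta) \leq \tfrac{4^b}{a\delta^b}\exp(-ma(\delta/2)^b)$ in this regime.

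The second step is to integrate $\mathbb{E} H = \int_0^T \mathbb{P}(H>\delta)\,d\delta$. I would split at $r^\ast := \max(r_0, r_m)$: below $r^\ast$ the integrand is trivially at most $1$, contributing at most $r^\ast \leq r_0 + r_m \mathbbm{1}_{(r_0,\infty)}(r_m)$; above $r^\ast$ I would apply the tail bound. The substitution $v = \delta/r_m$ exploits the defining identity $ma(r_m/2)^b = \log m$, turning the exponential factor into $m^{-v^b}$. Writing $m^{-v^b} = m^{-1} e^{-(v^b-1)\log m}$ and using the convexity inequality $v^b - 1 \geq b(v-1)$ valid for $b \geq 1$, $v \geq 1$, the remaining integral reduces to $\int_1^\infty e^{-b(v-1)\log m}\,dv = 1/(b\log m)$. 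Collecting the prefactors, with $a r_m^b = 2^b \log m/m$, delivers a tail contribution of the promised form $C_1(a,b)\, r_m/(\log m)^2$.

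The main obstacle will be the careful bookkeeping near the threshold $r_0$: the standard assumption offers no control on balls of radius below $r_0$, so one has to verify that the regime where both the net-size bound and the ball-measure lower bound apply starts at a scale consistent with splitting the integral at $\max(r_0, r_m)$; this is what produces the separate $r_0$ and $r_m\mathbbm{1}_{(r_0,\infty)}(r_m)$ terms in the final expression (rather than a single $\max$). A secondary technical point is the convexity step $v^b - 1 \geq b(v-1)$, which is what promotes the naive $1/\log m$ decay coming from $\exp(-\log m) = 1/m$ into the sharper $1/(\log m)^2$ factor in the statement; without it, one only recovers a $1/\log m$ correction.
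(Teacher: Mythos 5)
Your proposal is correct and follows essentially the same route as the paper: the paper's proof is the one-line combination of \eqref{eq:hausdorff} with Lemma~\ref{lem:covering}, whose proof is exactly your covering-plus-union-bound tail estimate $\mathbb{P}(H>\delta)\leq \frac{4^b}{a\delta^b}\exp(-ma(\delta/2)^b)$ followed by integration of the tail split at $r_0$ and $r_m$ (the paper defers the final integral to the strategy of Theorem~2 in \cite{chazal2013optimal}, for which your substitution $v=\delta/r_m$ and the bound $\int_1^\infty m^{-v^b}\,dv\leq 1/(bm\log m)$ is a valid filling-in). The only caveat is that your convexity step $v^b-1\geq b(v-1)$ requires $b\geq 1$; for $0<b<1$ one instead substitutes $u=ma(\delta/2)^b$ and bounds the resulting incomplete-gamma integral $\int_{\log m}^\infty u^{1/b-2}e^{-u}\,du$, which yields the same $r_m/(\log m)^2$ order.
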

\cite{chazal2013stochastic} controls the variance term, which is of the order of
$1/ {\sqrt n}$. Therefore, if $r_0$ is negligible,  we see that $n$ should be taken of the order of $\left(m/{\log m} \right) ^{2 / b}$.

We now turn to the closest sample estimator $\hat{\lambda}_n$ and investigate
its $\ell_\infty$ risk $
\mathbb{E}\left[ \Vert \lambda_{\mathbb{X}_\mu} - \hat{\lambda_n^m} \Vert_{\infty} \right]$, 
where the expectation is with respect to $( \Psi^m_{\mu})^{\otimes
n}$.
As before, in our analysis we rely on the stability property $ \mathbb{E}\left[ \Vert
    \lambda_{\mathbb{X}_\mu} - \hat{\lambda_n^m} \Vert_{\infty} \right] \leq
    \mathbb{E} \left[    H(\mathbb{X}_\mu, \widehat{C_n^m} ) \right]$, where the
    second expectation is with respect to $( \mu^{\otimes m})^{\otimes n}$.

\begin{theorem} \label{theo:RiskClosest}
Let $r_m=2\Big( \frac{\log(2^b m)}{a m}\Big)^{\frac{1}{b}}$.  If
$\mu \in \mathcal{P}(\mathbb{X})$ satisfies the $(a,b,r_0)$-standard
assumption, then
$$ 
\mathbb{E}\left[ \Vert \lambda_{\mathbb{X}_\mu} - \hat{\lambda_n^m}
\Vert_{\infty} \right] \leq r_0 \, + \, r_m
\mathbbm{1}_{(r_0,\infty)} (r_m) \, + \, C_2(a,b) \, r_m \, \frac{1}{n \; [ \, \log(2^b m)]^{n+1}},
$$
where $C_2(a,b)$ is a constant that depends on $a$ and $b$.
\end{theorem}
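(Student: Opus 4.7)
The plan is to reduce the landscape risk to an expected Hausdorff distance via stability, and then bound that expectation by integrating the tail of $H(\mathbb{X}_\mu, \widehat{C_n^m})$ using a covering/packing argument powered by the $(a,b,r_0)$-standard assumption. Starting from the inequality displayed just before the theorem,
$$ \mathbb{E}\bigl[ \Vert \lambda_{\mathbb{X}_\mu} - \hat{\lambda_n^m} \Vert_{\infty} \bigr] \leq \mathbb{E}\bigl[ H(\mathbb{X}_\mu, \widehat{C_n^m}) \bigr] = \int_0^\infty P\bigl(H(\mathbb{X}_\mu, \widehat{C_n^m}) > t\bigr) \, dt, $$
and using the fact that $\widehat{C_n^m}$ is the closest among $n$ i.i.d.\ samples from $\mu^{\otimes m}$, so that $P(H(\mathbb{X}_\mu, \widehat{C_n^m}) > t) = P(H(\mathbb{X}_\mu, S_1^m) > t)^n$, the problem reduces to controlling the tail of the Hausdorff distance for a single subsample.

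For the single-sample tail, I would take a maximal $(t/2)$-packing $c_1,\dots,c_N$ of $\mathbb{X}_\mu$; when $t/4 > r_0$ the disjoint balls $B(c_i, t/4)$ each have $\mu$-mass at least $a(t/4)^b$, so $N \leq 4^b/(a t^b)$. Maximality forces the packing to also be a $(t/2)$-cover, so $S_1^m$ hitting every $B(c_i, t/2)$ gives $H(\mathbb{X}_\mu, S_1^m) \leq t$. A union bound combined with $(1 - a(t/2)^b)^m \leq e^{-m a (t/2)^b}$ yields
$$ P\bigl(H(\mathbb{X}_\mu, S_1^m) > t\bigr) \leq \frac{4^b}{a t^b} \, e^{-m a (t/2)^b}. $$
A direct check shows this right-hand side equals exactly $1/\log(2^b m)$ at $t = r_m$, so raising to the $n$-th power produces a tail of size $[\log(2^b m)]^{-n}$ at the threshold $r_m$.

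To conclude, I would split the tail integral as $\int_0^{r_0} + \int_{r_0}^{r_0 + r_m \mathbbm{1}_{(r_0,\infty)}(r_m)} + \int_{r_0 + r_m}^\infty$. The first two pieces contribute the $r_0$ and $r_m\, \mathbbm{1}_{(r_0,\infty)}(r_m)$ terms in the bound. For the third piece, the substitution $u = n m a (t/2)^b$ turns the integrand into a scaled incomplete-gamma kernel $u^{1/b - n - 1} e^{-u}$ with lower limit $n\log(2^b m)$, and the asymptotic $\int_x^\infty u^{\alpha - 1} e^{-u}\, du \sim x^{\alpha - 1} e^{-x}$ as $x \to \infty$ collapses everything into an expression of the form $C_2(a,b)\, r_m / \bigl(n [\log(2^b m)]^{n+1}\bigr)$ after canceling the $(nma)^{-1/b}$ Jacobian factor against the powers of $n^n$ and $(2^b m)^{-n}$ generated by the substitution.

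The covering/packing step is essentially the same one used for \thmref{th:riskAverage}; the main obstacle is the bookkeeping in the final tail integral, where one has to verify that all $n$- and $m$-dependent factors rearrange cleanly so that the leftover $r_m = 2(\log(2^b m)/(am))^{1/b}$ appears as an explicit prefactor and the remaining constant depends only on $a$ and $b$.
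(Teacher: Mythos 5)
Your proposal is correct and follows essentially the same route as the paper: stability reduces the risk to $\mathbb{E}[H(\mathbb{X}_\mu,\widehat{C_n^m})]$, the tail of the closest sample is the $n$-th power of the single-sample tail, the single-sample tail is bounded by the covering/union-bound estimate $\frac{4^b}{a r^b}\exp(-m\frac{a}{2^b}r^b)$ from the paper's Lemma~\ref{lem:covering}, and the remaining integral over $r>r_m$ is handled by the same change of variables and incomplete-gamma tail bound (following Theorem~2 of \cite{chazal2013optimal}). The only cosmetic differences are that you spell out the packing argument that the paper delegates to its covering lemma and split the integral at $r_0+r_m$ rather than at $\max(r_0,r_m)$, neither of which changes the substance.
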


\begin{remark}
The risk of the closest
subsample method can in principle
be smaller than the average landscape method.
In Appendix \ref{sec:abtrunc} we show that if $\mu$ is the discrete uniform measure on a point cloud of size $N$, sampled from a measure satisfying the $(a,b,0)$-standard assumption, then $r_0$ is of the order of $(\frac{\log N}{N})^{1/b}$. 
When $r_0$ is negligible, the rates of theorems \ref{th:riskAverage} and \ref{theo:RiskClosest} are comparable, both of the order of $O(\frac{\log m}{m})^{1/b}$.
However, the average method
has another advantage: it is robust to outliers.
This point is discussed in detail in Appendix \ref{sec:robustness}.
\end{remark}

\section{Experiments}
\label{sec:experiments}

 \vspace{-2pt}

In this section, we illustrate our methods. Since computing the
persistent homology of the Vietoris-Rips (VR) filtrations built on top
of the large samples is infeasible, we resort to the subsampling
strategy described in Section \ref{sec:subsampling}. More formally,
let $X_N=\{x_1, \dots, x_N \}$ and $Y_N=\{y_1, \dots, y_N \}$ be two
large point clouds. We draw $n$ subsamples
each of size $m \ll N$ points from
$\mu$ and $\nu$, the discrete uniform measures on $X_N$ and $Y_N$, and
we compare the corresponding average landscapes and closest subsample
landscapes, induced by the persistent homology of the VR filtrations
built on top of the subsamples.  We apply this technique to two
examples.

\begin{figure}[!ht]
\centering
\includegraphics[width=5.3in]{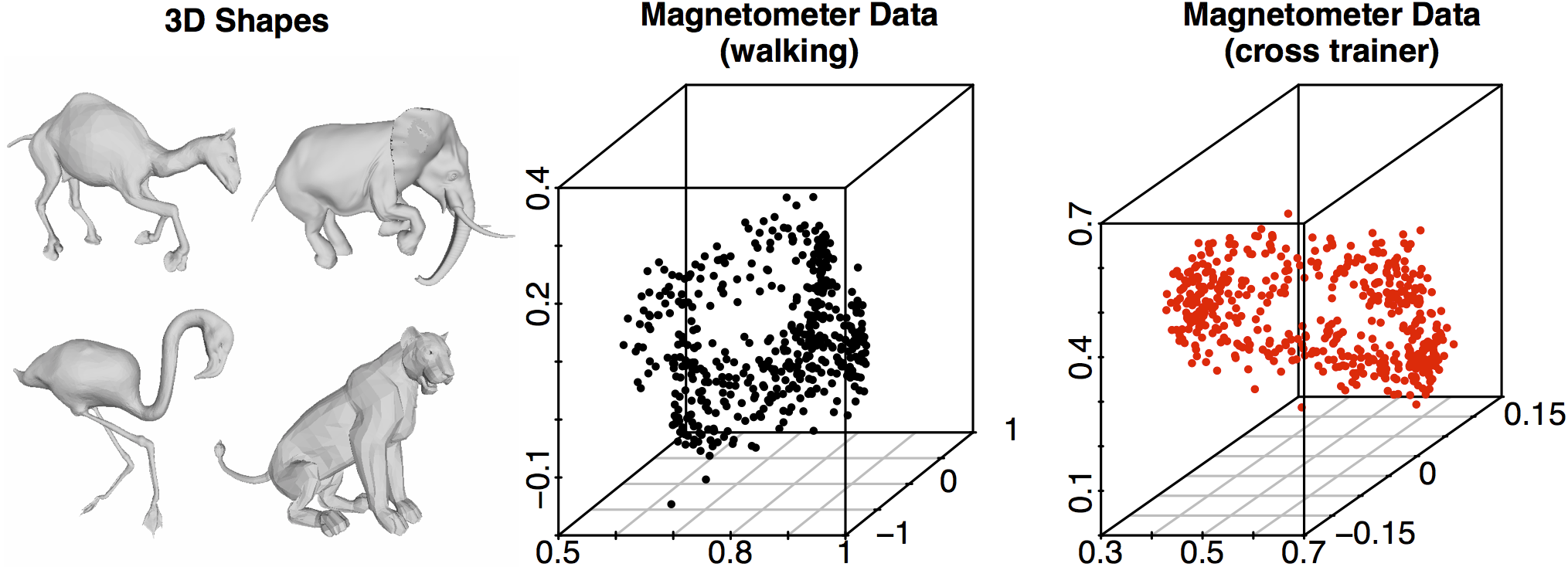}
\caption{Left: 3D shapes of the first experiment.
Middle and Left: 500 random points from the magnetometer data of the second
experiment.}
\label{fig:data}
\end{figure}

{\bf 3D Shapes.}  We use the publicly available database of
triangulated shapes \citep{sumner2004deformation}. We select a
single {\it pose} (\#2) of 4 different classes: {\it camel, elephant,
flamingo, lion}. The 4 shapes are represented in Figure
\ref{fig:data}. In practice, each shape consists of a 3D point
cloud embedded in the Euclidean space, with a number of vertices that
ranges from 7K to 40K.  The data are normalized, so that the diameter
of each shape is 1.  For $n=100$ times we subsample $m=300$ points
from each shape; then we select the closest subsample to the
corresponding original point cloud and compute $4\times n$ persistence
diagrams (dimension 1), one for each subsample. See Figure \ref{fig:animals}:
the
plot on the left shows the landscapes corresponding to the closest
subsamples of $m$ points among the $n$ different subsamples from each
shape; the plot in the middle shows the empirical average landscapes
within each class, computed as the pointwise average of $n$
landscapes, with a 95\% uniform confidence band for the true average
landscape, constructed using the method described in
\cite{chazal2013stochastic}; the dissimilarity matrix on the right
shows the pairwise $\ell_\infty$ distances between the average
landscapes (scale from yellow to red), which, according to Theorem
\ref{thm:stabW}, represent a lower bound for the pairwise Wasserstein
distances of the discrete uniform measures on the 4 different shapes.

\begin{figure}[!ht]
\centering
\includegraphics[width=5.4in]{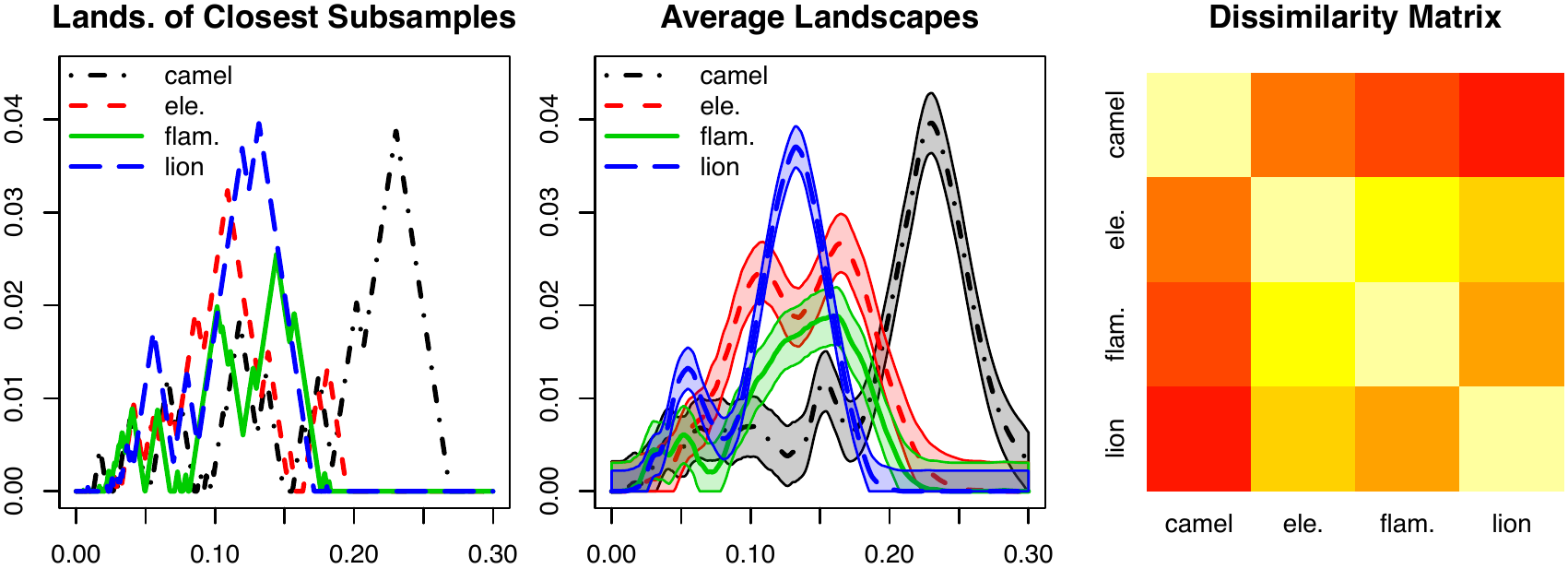}
\caption{Subsampling methods applied to 3D shapes. For $n=100$ subsamples of
size $m=300$, for each shape, we constructed the landscapes of the closest
subsample (left), the average landscape with 95\% confidence band (middle) and
the dissimilarity matrix of the pairwise $\ell_\infty$ distance between average
landscapes.}
\label{fig:animals}
\end{figure}

\begin{figure}[!ht]
\centering
\includegraphics[width=5.4in]{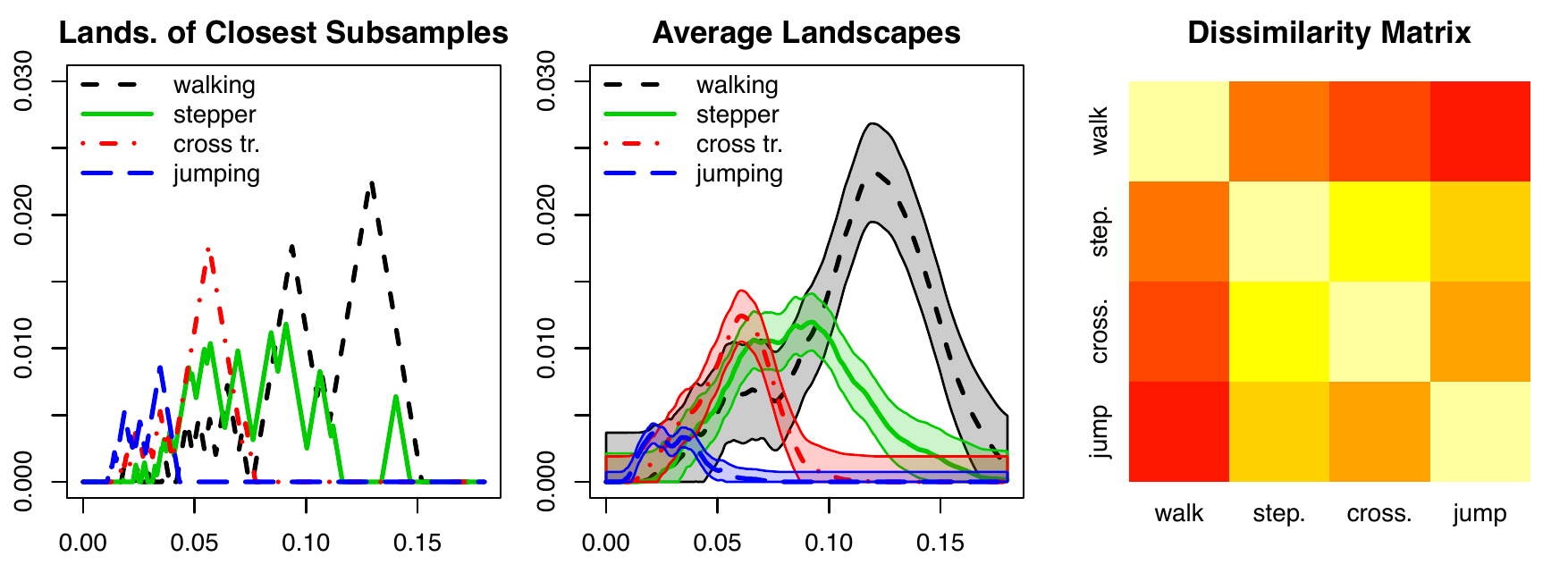}
\caption{Subsampling methods applied to magnetometer data.  For $n=80$
subsamples of size $m=200$, for each activity, we constructed the landscapes of
the closest subsample (left), the average landscape with 95\% confidence band
(middle) and the dissimilarity matrix of the pairwise $\ell_\infty$ distance
between average landscapes.}
\label{fig:activities}
\end{figure}
{\bf Magnetometer Data.}
For the second example, we consider the problem of distinguishing  human
activities performed while wearing inertial and magnetic sensor units. The
dataset is publicly available at the UCI Machine Learning
Repository\footnote{
http://archive.ics.uci.edu/ml/datasets/Daily+and+Sports+Activities} and is
described in \cite{barshan2013recognizing}, where it is used to classify 19
activities performed by 8 people that wear sensor units on the chest, arms and
legs. For ease of illustration, we report here the results on 4 activities
(walking, stepper, cross trainer, jumping) performed by a single person (\#1).
We use the data from the magnetomer of a single sensor (left leg), which
measures the direction of the magnetic field in the space at a frequency of
25Hz. For each activity there are 7,500 consecutive measurements that we treat
as a 3D point cloud in the Euclidean space. As an example, Figure \ref{fig:data}
shows 500 points at random for 2 activities (walking and using a cross trainer).
As in the previous example, for $n=80$ times, we subsample $m=200$ points  from
the point cloud of each activity, construct the landscapes of the closest
subsamples, the average landscapes (dim 1), and the dissimilarity matrix based
on the $\ell_\infty$ distances of the average landscapes. See Figure
\ref{fig:activities}.
To the best of our knowledge persistent homology has never been used to
study data from  accelerometers or magnetometers before. A remarkable advantage
is that the methods of persistent homology are insensitive to the orientation of
the input data, as opposed to other methods that require the exact calibration
of the sensor units; see, for example, \cite{altun2010comparative} and
\cite{barshan2013recognizing}.

\section{Conclusion}
\label{sec:conclusion}

We have presented a framework for
approximating the persistent homology
of a set using subsamples.
The method is simple and computationally fast.
Moreover, we provided stability results for the new summaries and bounds on the 
risk of the proposed estimators.
In the future we will release software for implementing
the method.
We plan to investigate other methods for
speeding up the computations.



{\small
\bibliography{ref}
}

\newpage

\appendix

\begin{center}
{\Large \bf
Appendix
}
\end{center}

\section{Technical results}
\label{sec:technical}

In this section we present some technical results that will be used to prove 
the main theorems. \\
First, we expand the notation introduced in the body of the paper (Section 3).
For any positive integer $m$, let $\phi_m : \mathbb{X}^m \to \mathcal{D}_T$ 
be the diagram
and
$\psi_m : \mathcal{D}_T \to \mathcal{L}_T$ the landscape, i.e.
$\phi_m(X) = D_{X} , \ \ \mbox{\rm for any } \ \ X = \{ x_1,\cdots , x_m \}
\subset \mathbb{X}$
and
$\psi(D_X) = \lambda_{D_X}=\lambda_X , \ \ \mbox{\rm for any } \ \ D_X \in \mathcal{D}_T.$
Given $\mu \in \mathcal{P}(\mathbb{X})$, we denote by $\Phi_\mu^{m}$ the
push-forward measure of $\mu^{\otimes m}$ by $\phi_m$, that is $\Phi_\mu^{m} =
(\phi_m)_*\mu$.
Similarly, we denote by $\Psi_\mu^{m}$ the push-forward (induced) measure of $\mu^{\otimes
m}$ by $\psi \circ \phi_m $, that is $\Psi_\mu^{m} = (\psi \circ \phi_m)_*\mu$.

For a fixed integer $m>0$, consider the metric space $(\mathbb{X}, \rho)$ and
the space $\mathbb{X}^m$ endowed with a metric~$\rho_m$. We impose two
conditions on $\rho_m$:
\begin{enumerate}
\item[(C1)] Given a real number $p\geq 1$, for any $X=\{x_1, \dots, x_m\}
\subset \mathbb{X}$ and $Y=\{y_1, \dots, y_m\} \subset \mathbb{X}$,
\begin{equation}
\rho_m(X,Y) \leq \left(\sum_{i=1}^m \rho(x_i,y_i)^p \right)^\frac{1}{p},
\label{eq:dist1}
\end{equation}
\item[(C2)] For any $X=\{x_1, \dots, x_m\} \subset \mathbb{X}$ and $Y=\{y_1,
\dots, y_m\} \subset \mathbb{X}$,
\begin{equation}
H(X,Y) \leq \rho_m(X,Y).
\label{eq:dist2}
\end{equation}
\end{enumerate}

Two examples of distance that satisfy conditions (C1) and (C2) are the Hausdorff
distance and the $L_p$-distance
$
\rho_m(X,Y)= \left(\sum_{i=1}^m \rho(x_i,y_i)^p \right)^\frac{1}{p}.
$

\begin{lemma}
\label{lemma:ineq1}
For any probability measures $\mu, \nu \in \mathcal{P}(\mathbb{X})$ and any
metric $\rho_m: \mathbb{X}^m\times\mathbb{X}^m \to \mathbb{R}$ that satisfies
(C1), we have
$$
W_{\rho_m,p}(\mu^{\otimes m}, \nu^{\otimes m}) \leq m^{\frac{1}{p}}
W_{\rho,p}(\mu,\nu) \/.$$
\end{lemma}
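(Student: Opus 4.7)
The standard way to upper bound a Wasserstein distance is to exhibit a specific coupling and compute its cost. Here, the natural candidate is to take the $m$-fold product of an optimal (or near-optimal) coupling of $\mu$ and $\nu$. I would first fix $\varepsilon>0$ and choose a coupling $\Pi \in \mathcal{P}(\mathbb{X}\times\mathbb{X})$ with marginals $\mu$ and $\nu$ such that
\[
\left(\int_{\mathbb{X}\times\mathbb{X}} \rho(x,y)^p\, d\Pi(x,y)\right)^{1/p} \leq W_{\rho,p}(\mu,\nu) + \varepsilon.
\]
Then I would form the product measure $\Pi^{\otimes m}$ on $(\mathbb{X}\times\mathbb{X})^m$ and reinterpret it, via the obvious bijection, as a measure on $\mathbb{X}^m\times\mathbb{X}^m$. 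By construction its marginals are $\mu^{\otimes m}$ and $\nu^{\otimes m}$, so it is admissible in the infimum defining $W_{\rho_m,p}(\mu^{\otimes m}, \nu^{\otimes m})$.

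The bound then follows from a direct computation. Applying condition (C1) inside the integral and Fubini,
\[
\int \rho_m(X,Y)^p\, d\Pi^{\otimes m}(X,Y) \;\leq\; \int \sum_{i=1}^m \rho(x_i,y_i)^p\, d\Pi^{\otimes m} \;=\; m\int \rho(x,y)^p\, d\Pi(x,y),
\]
where the last equality uses that each marginal of $\Pi^{\otimes m}$ on the $i$-th coordinate pair is $\Pi$. Taking $p$-th roots gives
\[
W_{\rho_m,p}(\mu^{\otimes m}, \nu^{\otimes m}) \;\leq\; m^{1/p}\bigl(W_{\rho,p}(\mu,\nu) + \varepsilon\bigr),
\]
and letting $\varepsilon \to 0$ yields the desired inequality.

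\textbf{Main obstacle.} The argument is essentially one line once the coupling $\Pi^{\otimes m}$ is set up, so there is no serious technical hurdle. The only subtlety I anticipate is cosmetic: the paper writes elements of $\mathbb{X}^m$ as unordered sets $X=\{x_1,\dots,x_m\}$, while condition~(C1) and the product coupling inherently use ordered tuples (the index $i$ in $\rho(x_i,y_i)$ matters). I would handle this by working throughout with ordered $m$-tuples and noting that the product measure, the marginals $\mu^{\otimes m},\nu^{\otimes m}$, and the bound in~(C1) are all naturally defined on $\mathbb{X}^m$ viewed as ordered tuples; the resulting inequality then transfers to the set-valued picture used elsewhere in the paper without additional work.
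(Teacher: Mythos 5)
Your proof is correct and follows essentially the same route as the paper's: both use the product coupling $\Pi^{\otimes m}$, apply condition (C1) together with Fubini to bound its $p$-cost by $m\int\rho^p\,d\Pi$, and then pass to the infimum over couplings (the paper optimizes over all plans at the end rather than starting from a near-optimal one, but this is an immaterial difference). Your remark about ordered tuples versus sets correctly identifies the same implicit reindexing the paper handles with its phrase ``up to reordering the components of $\mathbb{X}^{2m}$.''
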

\noindent
{\bf Remark:} The bound of the above lemma is tight: it is an equality when
$\mu$ is a Dirac measure and $\nu$ any other measure.
\begin{proof}
Let $\Pi \in \mathcal{P}(\mathbb{X} \times \mathbb{X})$ be a transport plan
between $\mu$ and $\nu$. Up to reordering the components of $\mathbb{X}^{2m}$,
$\Pi^{\otimes m}$ is a transport plan between $\mu^{\otimes m}$ and
$\nu^{\otimes m}$ whose $p$-cost is given by
\begin{eqnarray*}
\int_{\mathbb{X}^m \times \mathbb{X}^m}
\rho_m(X,Y)^p d\Pi^{\otimes m}(X,Y)
 & \leq & \int_{\mathbb{X}^m \times \mathbb{X}^m} \sum_{i=1}^m \rho(x_i,y_i)^p
\; d\Pi(x_1,y_1) \cdots d\Pi(x_m,y_m) \\
 & = & m \int_{\mathbb{X} \times \mathbb{X}} \rho(x_1,y_1)^p d\Pi(x_1,y_1).
\end{eqnarray*}
The lemma follows by taking the minimum over all transport plans on
both sides of this inequality.
\end{proof}

\begin{lemma}
\label{lemma:ineq2}
For any probability measures $\mu, \nu \in \mathcal{P}(\mathbb{X})$ and any
metric $\rho_m: \mathbb{X}^m\times\mathbb{X}^m \to \mathbb{R}$ that satisfies
(C2), we have
$$
W_{d_b,p}\left( \Phi_\mu^{m} , \;  \Phi_\nu^{m} \right) \leq
W_{\rho_m,p}(\mu^{\otimes m}, \nu^{\otimes m}).
$$
\end{lemma}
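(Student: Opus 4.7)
My plan is to lift any coupling between $\mu^{\otimes m}$ and $\nu^{\otimes m}$ on $\mathbb{X}^m\times\mathbb{X}^m$ to a coupling between the pushforward measures $\Phi_\mu^m$ and $\Phi_\nu^m$ on $\mathcal{D}_T\times\mathcal{D}_T$ via the map $\phi_m\times\phi_m$, and to control the $p$-cost of the lifted coupling using the stability of persistence diagrams together with condition (C2). In other words, I would reduce the statement to the well-known fact that the $p$-Wasserstein distance is contractive under $1$-Lipschitz maps.

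The key step is to verify that $\phi_m : (\mathbb{X}^m,\rho_m) \to (\mathcal{D}_T,\bottle)$ is indeed $1$-Lipschitz. For two $m$-tuples $X,Y$, both sit inside the common ambient metric space $(\mathbb{X},\rho)$, so the stability inequality (\ref{bot-dgh}) applies with the Hausdorff distance in place of $2\dgh$, giving $\bottle(\phi_m(X),\phi_m(Y)) \leq H(X,Y)$. Condition (C2) then upgrades this to $\bottle(\phi_m(X),\phi_m(Y)) \leq \rho_m(X,Y)$, which is exactly the Lipschitz bound sought. This is the one place where hypothesis (C2) is essential: without it one could not convert the Hausdorff-based persistence stability into a statement at the level of $\rho_m$.

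To conclude, for any transport plan $\Pi$ on $\mathbb{X}^m\times\mathbb{X}^m$ with marginals $\mu^{\otimes m}$ and $\nu^{\otimes m}$, the pushforward $\Pi' = (\phi_m\times\phi_m)_*\Pi$ is automatically a coupling between $\Phi_\mu^m$ and $\Phi_\nu^m$, and the change-of-variables formula combined with the Lipschitz bound gives
$$\int_{\mathcal{D}_T\times \mathcal{D}_T} \bottle(D,D')^p\, d\Pi'(D,D') \;=\; \int_{\mathbb{X}^m\times \mathbb{X}^m}\bottle(\phi_m(X),\phi_m(Y))^p\, d\Pi(X,Y) \;\leq\; \int_{\mathbb{X}^m\times\mathbb{X}^m} \rho_m(X,Y)^p\, d\Pi(X,Y).$$
Taking the $p$-th root and then infimizing over all admissible $\Pi$ on both sides yields the claimed inequality. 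There is essentially no obstacle beyond correctly invoking the stability theorem and checking that the pushforward of a coupling is a coupling of the pushforwards; the whole content of the lemma is a clean packaging of a $1$-Lipschitz pushforward argument, with (C2) bridging $\rho_m$ and the Hausdorff distance.
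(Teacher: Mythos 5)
Your proof is correct and follows essentially the same route as the paper: the map $\Lambda_m(X,Y)=(D_X,D_Y)$ used there is exactly your $\phi_m\times\phi_m$, and the paper likewise pushes forward an arbitrary transport plan, bounds the $p$-cost via the stability theorem ($\bottle \leq H$) followed by (C2), and infimizes over plans. Your framing of the argument as contractivity of $W_p$ under the $1$-Lipschitz map $\phi_m$ is a clean way to package the same computation.
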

\begin{proof}
This is a consequence of the stability theorem for persistence
diagrams.
Given $X,Y \subset \mathbb{X}^m$, define
$$
\Lambda_m (X,Y)= \left( D_X, D_Y \right).
$$
If $\Pi \in \mathcal{P}(\mathbb{X}^m \times \mathbb{X}^m)$ is a transport plan
between $\mu^{\otimes m}$ and $\nu^{\otimes m}$ then $\Lambda_{m,*}\Pi$ is a
transport plan between $\Phi_\mu^{m}$ and $\Phi_\nu^{m}$. Its $p$-cost is given
by
\begin{eqnarray*}
\int_{\mathcal{D}_T \times \mathcal{D}_T} d_b(D_X,
D_Y)^p d\Lambda_{m,*}\Pi(D_X,D_Y)
 & = & \int_{\mathbb{X}^m \times \mathbb{X}^m} d_b(\phi_m(X),\phi_m(Y))^p
d\Pi(X,Y) \\
  & \leq & \int_{\mathbb{X}^m \times \mathbb{X}^m} H(X,Y)^p d\Pi(X,Y)\ \
(\mbox{\rm stability theorem })\\
  & \leq & \int_{\mathbb{X}^m \times \mathbb{X}^m} \rho_m(X,Y)^p d\Pi(X,Y).
\end{eqnarray*}
The lemma follows by taking the minimum over all transport plans on
both sides of this inequality.
\end{proof}

\begin{lemma}
\label{lemma:ineq3}
Let $\mu$ and $\nu$ be two probability measures  on $\mathbb{X}$. Let $\lambda_X
\sim \Psi_\mu^m$ and $\lambda_Y \sim \Psi_\nu^m$. Then
$$
\left\Vert \mathbb{E}_{\Psi_\mu^m}[\lambda_X] - \mathbb{E}_{\Psi_\nu^m}[\lambda_Y]
\right\Vert_\infty \leq W_{d_b,p}\left( \Phi_\mu^{m} , \;  \Phi_\nu^{m} \right).
$$
\end{lemma}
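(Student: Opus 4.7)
The plan is to reduce the landscape-level statement to a diagram-level statement using the $1$-Lipschitz property of $\lambda$ in the bottleneck metric (Lemma \ref{lemma:basic-prop}(ii)), and then recognize what remains as a transport cost on $\mathcal{D}_T$.

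First I would rewrite both expectations as integrals over the push-forward measures on $\mathcal{D}_T$. By construction, $\Psi_\mu^m = \psi_* \Phi_\mu^m$, so for any fixed $t \in [0,T]$,
\begin{equation*}
\mathbb{E}_{\Psi_\mu^m}[\lambda_X(t)] \;=\; \int_{\mathcal{D}_T} \lambda_D(t)\, d\Phi_\mu^m(D),
\end{equation*}
and likewise for $\nu$. Given any transport plan $\Pi \in \mathcal{P}(\mathcal{D}_T \times \mathcal{D}_T)$ with marginals $\Phi_\mu^m$ and $\Phi_\nu^m$, the difference of expectations can be expressed as $\int_{\mathcal{D}_T \times \mathcal{D}_T} (\lambda_{D}(t) - \lambda_{D'}(t))\, d\Pi(D,D')$.

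Next I would apply Lemma \ref{lemma:basic-prop}(ii) pointwise under the integral: $|\lambda_D(t) - \lambda_{D'}(t)| \le \bottle(D,D')$. This gives
\begin{equation*}
\bigl|\mathbb{E}_{\Psi_\mu^m}[\lambda_X(t)] - \mathbb{E}_{\Psi_\nu^m}[\lambda_Y(t)]\bigr| \;\le\; \int_{\mathcal{D}_T \times \mathcal{D}_T} \bottle(D,D')\, d\Pi(D,D').
\end{equation*}
For $p \ge 1$, Jensen's inequality on the (probability) measure $\Pi$ upgrades the $L^1$ cost to an $L^p$ cost:
\begin{equation*}
\int \bottle(D,D')\, d\Pi \;\le\; \left( \int \bottle(D,D')^p\, d\Pi \right)^{1/p}.
\end{equation*}

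Finally I would take the infimum over all admissible transport plans $\Pi$ to obtain $W_{\bottle,p}(\Phi_\mu^m, \Phi_\nu^m)$ on the right-hand side, and then take the supremum over $t \in [0,T]$ on the left-hand side; since the right-hand side is independent of $t$, this yields the claimed $\ell_\infty$ bound. There is no real obstacle here: the only subtlety is ensuring integrability, which is automatic because $\lambda_D(t) \in [0, T/2]$ for diagrams in $\mathcal{D}_T$ and $\bottle$ is uniformly bounded on $\mathcal{D}_T \times \mathcal{D}_T$, so Fubini and Jensen apply without issue.
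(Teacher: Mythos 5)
Your proposal is correct and takes essentially the same route as the paper's proof: both couple $\Phi_\mu^m$ and $\Phi_\nu^m$ with an arbitrary transport plan, apply the landscape stability bound $|\lambda_D(t)-\lambda_{D'}(t)|\leq \bottle(D,D')$ under the integral, use Jensen's inequality to pass to the $p$-th power cost, and finish by taking the infimum over plans and the supremum over $t$. The only cosmetic difference is the order in which Jensen is invoked (the paper bounds $|\E[Z]|^p\leq\E[|Z|^p]$ directly, while you first bound by the $L^1$ cost and then upgrade to $L^p$).
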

\begin{proof}
Let $\Pi$ be a transport plan between $\Phi_\mu^{m}$ and $\Phi_\nu^{m}$. For any
$t \in \mathbb{R}$ we have
\begin{eqnarray*}
\left |  \mathbb{E}_{\Psi_\mu^m}[\lambda_X](t) -
\mathbb{E}_{\Psi_\nu^m}[\lambda_Y](t) \right |^p
& = & \left |  \mathbb{E}[\lambda_X(t) - \lambda_Y(t)] \right |^p \\
& \leq &   \mathbb{E}\left[ | \lambda_X(t) - \lambda_Y(t) |^p \right]  \ \
(\mbox{\rm Jensen inequality}) \\
& \leq &   \mathbb{E}\left[ d_b(D_X, D_Y)^p \right]    \ \ (\mbox{\rm
Stability of landscapes}) \\
& = &  \int_{\mathcal{D}_T \times \mathcal{D}_T} d_b(D_X,D_Y)^p d\Pi(D_X, D_Y) \\
 & = & C_p(\Pi)^p.
\end{eqnarray*}
\end{proof}

The following lemma is similar to Theorem 2 in \cite{chazal2013optimal}.
\begin{lemma}
\label{lem:covering}
Let $X$ be a sample of size $m$ from a measure $\mu \in \mathcal{P}(\mathbb{X})$
that satisfies the $(a,b,r_0)$-standard assumption.
Let $r_m=2\left(\frac{\log m}{am} \right)^{1/b}$. Then
$$
\mathbb{E}\left[ H( X, \mathbb{X}_\mu )  \right] \leq r_0 \, + \,
2\left(\frac{\log m}{am} \right)^{1/b} \mathbbm{1}_{(r_0,\infty)}( r_m) \, + \, 2 \, C_1(a,b) \left(\frac{\log m}{am}
\right)^{1/b} \frac{1}{(\log m)^2},
$$
where $C_1(a,b)$ is a constant depending on $a$ and $b$.
\end{lemma}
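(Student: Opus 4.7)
The plan is to follow the classical covering-argument proof of Hausdorff rates for point clouds under a standard assumption, as in Theorem 2 of \cite{chazal2013optimal}, inserting the $r_0$ truncation where needed. The starting point is the layer-cake identity
\begin{equation*}
\mathbb{E}[H(X,\mathbb{X}_\mu)] \;=\; \int_0^{T/2} \mathbb{P}\bigl(H(X,\mathbb{X}_\mu) > t\bigr)\, dt,
\end{equation*}
which is finite since $\mathrm{diam}(\mathbb{X})\le T/2$. I will split this integral at the level $t^\star = r_0 + r_m\,\mathbbm{1}_{(r_0,\infty)}(r_m)$, bound the $[0,t^\star]$ piece trivially by $t^\star$, and spend the rest of the argument controlling $\int_{t^\star}^{T/2}\mathbb{P}(H > t)\,dt$.

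For the tail bound, I would take, for each $t$ with $t/2 > r_0$, a maximal $(t/2)$-packing $\{y_1,\dots,y_K\}$ of $\mathbb{X}_\mu$; by maximality, this is also a $(t/2)$-cover. If every ball $B(y_i,t/2)$ meets the sample, then the triangle inequality forces $H(X,\mathbb{X}_\mu)\le t$, so $\{H > t\}$ implies that at least one such ball is missed by all $m$ sample points. The open balls $B(y_i,t/4)$ are disjoint; applying the $(a,b,r_0)$-standard assumption on scale $t/4$ to each of them bounds the covering number by $K \le 4^b/(a t^b)$. A union bound together with the inequality $(1-a(t/2)^b)^m \le \exp(-a(t/2)^b m)$ then yields
\begin{equation*}
\mathbb{P}\bigl(H(X,\mathbb{X}_\mu) > t\bigr) \;\le\; \frac{4^b}{a t^b}\,\exp\!\Bigl(-\frac{a}{2^b}\,t^b\,m\Bigr),
\end{equation*}
valid once $t$ is a fixed multiple of $r_0$.

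To finish, I would bound $\int_{t^\star}^{\infty} t^{-b} \exp(-a t^b m / 2^b)\,dt$ by the change of variables $u = a t^b m / 2^b$. The definition $r_m = 2(\log m / (am))^{1/b}$ is calibrated precisely so that $t=r_m$ corresponds to $u=\log m$; in the regime $r_m>r_0$ the lower endpoint $t^\star \ge r_m$ maps to $u\ge \log m$. The integral becomes a constant multiple of $m^{-1/b}\int_{\log m}^{\infty} u^{1/b-2}\,e^{-u}\,du$, and a standard incomplete-Gamma estimate bounds this last integral by $C(\log m)^{1/b-2}/m$ for $m$ large. Collecting constants and using $r_m \asymp (\log m/(am))^{1/b}$ collapses the tail contribution to $C_1(a,b)\,r_m/(\log m)^2$. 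Adding the trivial contribution from $[0,t^\star]$ gives the three terms in the statement.

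The main obstacle is choosing the split level $t^\star$ so that it simultaneously (i) makes the covering/packing bound valid (which requires $t$ to exceed a multiple of $r_0$) and (ii) delivers the advertised rate. The indicator $\mathbbm{1}_{(r_0,\infty)}(r_m)$ is exactly what reconciles the two cases: when $r_m \le r_0$ the covering argument gives nothing new at scale $r_m$, but the trivial bound $t^\star = r_0$ already absorbs the dominant error; when $r_m > r_0$ one recovers the usual rate $r_m$ with a vanishing correction $r_m/(\log m)^2$.
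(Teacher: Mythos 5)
Your proposal is correct and follows essentially the same route as the paper's proof: the layer-cake formula split at $r_0 + r_m\,\mathbbm{1}_{(r_0,\infty)}(r_m)$, the covering/union-bound estimate $\mathbb{P}(H(X,\mathbb{X}_\mu)>t)\le \frac{4^b}{at^b}\exp(-\frac{a}{2^b}t^bm)$, and the change of variables $u=at^bm/2^b$ with an incomplete-Gamma tail estimate, exactly as in the appendix (which in turn defers to Theorem 2 of \cite{chazal2013optimal}). The only addition is that you spell out the maximal-packing argument for the covering-number bound that the paper merely asserts; note that, just as in the paper's own write-up, the standard assumption is invoked at scales $t/2$ and $t/4$ while the integral starts at $r_0$, a harmless mismatch absorbed into the constants.
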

\begin{proof}
Let $r>r_0$. It can be proven that
$
q:=\text{Cv} \left( \mathbb{X}_\mu, r/2 \right) \leq \frac{4^b}{a r^b}  \vee 1
$, where Cv($ \mathbb{X}_\mu,2 r$) denotes  the number of balls of radius $r/2$
that are necessary to cover $\mathbb{X}_\mu$ . Let $\mathcal{C}=\{x_1, \dots ,
x_{p}\}$  be a set of centers such that  $B(x_1, r/2), \dots , B(x_p, r/2)$ is a
covering of $\mathbb{X}_\mu$.
Then,
\begin{align*}
\mathbb{P} \left( H( X, \mathbb{X}_\mu )  > r \right)
& \leq   \mathbb{P}\left(H(X, \mathcal{C})+H(\mathcal{C}, \mathbb{X}_\mu) > r
\right)\\
& \leq   \mathbb{P}\left(H(X, \mathcal{C})  > r/2 \right)\\
& \leq  \mathbb{P}\left( \exists i \in \{ 1, \cdots, p  \} \ \mbox{\rm such
that} \  X   \cap B(x_i, r/2) = \emptyset \right)    \\
 & \leq  \sum_{ i = 1 } ^p   \mathbb{P} \left( X   \cap B(x_i, r/2) = \emptyset
\right)  \\
  & \leq    \frac{4^b}{a r^b}     \left[  1 -    \inf_{i=1 \dots p }
\mathbb{P}(B(x_i, r/2)) \right] ^m   \\
& \leq \frac{4^b}{a r^b}     \left[  1 -     \frac{a  r^b}{2^b} \right] ^m  \\
& \leq  \frac{4^b}{a r^b}    \exp\left( -   m   \frac{ a}{2^b}  r^b\right)
\end{align*}
Then
\begin{align}
\mathbb{E}\left[ H( X, \mathbb{X}_\mu )  \right] &= \int_{r>0} \mathbb{P} \left(
H( X, \mathbb{X}_\mu )  > r \right) dr \nonumber\\
& \leq r_0 + \int_{r>r_0} \mathbb{P} \left( H( X, \mathbb{X}_\mu )  > r \right)
dr . \label{eq:r0}
\end{align}
If $r_m \leq r_0$ then the
last quantity in \eqref{eq:r0} is bounded by
$$
r_0 + \int_{r>r_m} \mathbb{P} \left( H( X, \mathbb{X}_\mu )  > r \right) dr,
$$
otherwise \eqref{eq:r0} is bounded by
$$
r_0 + \int_{r>0} \mathbb{P} \left( H( X, \mathbb{X}_\mu )  > r \right) dr \leq
r_0 + r_m +\int_{r>r_m} \mathbb{P} \left( H( X, \mathbb{X}_\mu )  > r \right)
dr.
$$
In either case, we follow the strategy in \cite{chazal2013optimal} to obtain the
following bound:
\begin{align*}
\int_{r>r_m} \mathbb{P} \left( H( X, \mathbb{X}_\mu )  > r \right) dr \leq
2\, C(a,b) \left(\frac{\log m}{am} \right)^{1/b} \frac{1}{(\log m)^2},
\end{align*}
which implies that
$$
\mathbb{E}\left[ H( X, \mathbb{X}_\mu )  \right] \leq r_0 \, + \, r_m
\mathbbm{1}_{(r_0,\infty)}(r_m) \, + \, 2\, C(a,b) \left(\frac{\log m}{am} \right)^{1/b}
\frac{1}{(\log m)^2}.
$$
\end{proof}

\section{Main Proofs}
\label{sec:proofs}
\paragraph{Proof of Theorem \ref{thm:stabW}}
It immediately follows from the three following inequalities of
Lemmas \ref{lemma:ineq1}, \ref{lemma:ineq2} and \ref{lemma:ineq3}:
\begin{itemize}
\item upperbound on the Wasserstein distance between the tensor product of
measures:
$$
W_{\rho_m,p}(\mu^{\otimes m}, \nu^{\otimes m}) \leq m^{\frac{1}{p}}
W_{\rho,p}(\mu,\nu)
$$
\item from measures on $\mathbb{X}^m$ to measures on $\mathcal{D}$:
$$
W_{d_b,p}\left( \Phi_\mu^{m} , \;  \Phi_\nu^{m} \right) \leq
W_{\rho_m,p}(\mu^{\otimes m}, \nu^{\otimes m})
$$
\item from measures on $\mathcal{D}$ to difference of the expected landscapes:
$$
\left\Vert \mathbb{E}_{\Psi_\mu^m}[\lambda_X] - \mathbb{E}_{\Psi_\nu^m}[\lambda_Y]
\right\Vert_\infty \leq W_{d_b,p}\left( \Phi_\mu^{m} , \; \Phi_\nu^{m} \right)
$$
\end{itemize}
\qed

\paragraph{Proof of Theorem \ref{thm:stabH}}
\begin{align}
\| \mathbb{E}_{\Psi_\mu^{m}}(\lambda_X) - \mathbb{E}_{\Psi_\nu^{m}}(\lambda_Y)
\|_\infty &=
 \int_{\varepsilon>0} \mathbb{P}_{\Psi_\mu^{m} \otimes \Psi_\nu^{m}} \left(
\Vert \lambda_X - \lambda_Y \Vert_\infty > \varepsilon \right) d\varepsilon
\nonumber \\
 &= \varepsilon_0 +  \int_{\varepsilon>\varepsilon_0} \mathbb{P}_{\Psi_\mu^{m}
\otimes \Psi_\nu^{m}} \left( \Vert \lambda_X - \lambda_Y \Vert_\infty > \varepsilon
\right) d\varepsilon.
\label{eq:intLambda}
\end{align}
The event $\{ \Vert \lambda_X - \lambda_Y \Vert_\infty > \varepsilon \}$ inside the
integral implies that
\begin{equation} \label{eq:eps0}
\varepsilon_0 \leq \varepsilon < H (X,Y) \leq H (X,\mathbb{X}_\mu) + H
(\mathbb{X}_\mu,\mathbb{X}_\nu) + H (Y,\mathbb{X}_\nu),
\end{equation}
where X and Y are two samples of $m$ points from $\mu$ and $\nu$, respectively.
Let $\varepsilon_0= H(\mathbb{X}_\mu, \mathbb{X}_\nu)$. By \eqref{eq:eps0} it
follows that at least one of the following conditions holds:
\begin{align*}
& H (X, \mathbb{X}_\mu) \geq \frac{\varepsilon-\varepsilon_0}{2},\\
& H (Y,\mathbb{X}_\nu) \geq \frac{\varepsilon-\varepsilon_0}{2}.
\end{align*}
We assume that the first condition holds (the other case follows similarly).
Then the last quantity in equation \eqref{eq:intLambda} can be bounded by
\begin{align*}
& \varepsilon_0 +  \int_{\varepsilon>\varepsilon_0} \mathbb{P}\left(
H(X,\mathbb{X}_\mu) \geq \frac{\varepsilon-\varepsilon_0}{2} \right)
d\varepsilon \\
=&H(\mathbb{X}_\mu, \mathbb{X}_\nu) +  2 \int_{u>0} \mathbb{P}\left(
H(X,\mathbb{X}_\mu) \geq u \right) du \\
=&H(\mathbb{X}_\mu, \mathbb{X}_\nu) +  2 \mathbb{E}\left[ H( X, \mathbb{X}_\mu )
 \right] \\
\leq & H(\mathbb{X}_\mu, \mathbb{X}_\nu) + 2r_0 \, + \, 4\left(\frac{\log m}{am}
\right)^{1/b} \mathbbm{1}_{(r_0,\infty)}( r_m) \, + \, 4 \, C_1(a,b) \left(\frac{\log m}{am} \right)^{1/b} \frac{1}{(\log
m)^2},
\end{align*}
where the last inequality follows from Lemma \ref{lem:covering}.
\qed

\paragraph{Proof of Theorem \ref{th:riskAverage}}
It follows directly from \eqref{eq:hausdorff} and Lemma \ref{lem:covering} . 
\qed

\paragraph{Proof of Theorem \ref{theo:RiskClosest}}
\begin{align*}
\mathbb{E}\left[ \Vert \lambda_{\mathbb{X}_\mu} - \hat{\lambda_n^m}
\Vert_{\infty} \right] & \leq \mathbb{E}\left[ H(\mathbb{X}_\mu, \hat{C^m_n})
\right] \\
& \leq \int_{r>0} \mathbb{P}\left( H(\mathbb{X}_\mu, \hat{C^m_n}) > r \right) dr
\\
& \leq r_0 + \int_{r>r_0} \left[\mathbb{P}\left( H(\mathbb{X}_\mu, {S_1^m}) > r
\right) \right]^n dr \\
& \leq r_0 + \int_{r>r_0} \left[  \frac{4^b}{a r^b}    \exp\left( -   m   \frac{
a}{2^b}  r^b\right) \right]^n dr ,
\end{align*}
where the last inequality follows from Lemma \ref{lem:covering}.
If $r_m \leq r_0$ then the last term is upper bounded by
$$
r_0 +  \int_{r>r_m} \left[  \frac{4^b}{a r^b}    \exp\left( -   m   \frac{
a}{2^b}  r^b\right) \right]^n dr,
$$
otherwise it is bounded by
$$
r_0 + r_m +  \int_{r>r_m} \left[  \frac{4^b}{a r^b}    \exp\left( -   m   \frac{
a}{2^b}  r^b\right) \right]^n dr. \\
$$
In either case,
\begin{align*}
\int_{r>r_m} \left[  \frac{4^b}{a r^b}    \exp\left( -   m   \frac{ a}{2^b}
r^b\right) \right]^n dr &= 2\frac{ 2^{bn}}{b} (ma)^{-1/b} m^n \int_{u>\log m}
u^{1/b-n-1} \exp(-n u) du \\
&\leq 2 C_2(a,b) \left(\frac{\log(2^b m)}{am} \right)^{1/b} \frac{1}{n \; [ \,
\log(2^b m)]^{n+1}} \; ,
\end{align*}
where in the last inequality we applied the same strategy used to prove Theorem 2 in \cite{chazal2013optimal}.
\qed

\section{About the $(a,b,r_0)$-standard assumption }
\label{sec:abtrunc}

The aim of this section is to explain why the $(a,b, r_0)$-standard assumption  is relevant, in particular when $\mu$ is a discrete measure. Our argument is related to weighted empirical processes, which have been studied in details by Alexander; see \cite{Alexander85,alexander1987rates,alexander1987central}. A new look on this problem has been proposed more recently in \cite{GineKolch06,GKW03} by using Talagrand concentration inequalities. The following result  from \cite{Alexander85} will be sufficient here.
Let $(\mathbb{X}, \rho,\eta)$ be a measure metric space and let $\eta_N$ be the empirical counterpart of $\eta$. 
\begin{proposition}
Let $\mathcal C$ be a VC class of measurable sets of index $v$ of $\mathbb{X}$. Then for every $\delta$,
$\varepsilon >0$ there exists $K$ such that
\begin{equation} \label{eq:Alex}
\eta \left[
\sup \left\{
 \left|  \frac{\eta_N(C)-\eta(C)}{\eta(C)}   \right|  \: : \:
   \eta(C)  \geq K v \frac {\log N} N, \, C \in \mathcal C
 \right\}    > \varepsilon
\right]
 = O(N ^{-(1+ \delta)v} ) .
\end{equation}
\end{proposition}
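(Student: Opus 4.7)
The plan is to prove this weighted empirical process bound by a classical peeling (layer-cake) argument combined with a concentration inequality tailored to VC classes.

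First, I would decompose the ``large measure'' part of $\mathcal{C}$ into dyadic shells. Writing $t_N = K v \log N / N$, set
$$
\mathcal{C}_j \;=\; \{C \in \mathcal{C} \, : \, 2^{j-1} t_N \leq \eta(C) < 2^j t_N\}, \qquad j \geq 1,
$$
so that the supremum in the statement splits as $\sup_j \sup_{C \in \mathcal{C}_j} |\eta_N(C) - \eta(C)|/\eta(C)$. On each shell the denominator is pinned up to a factor of $2$, so it is enough to control the uniform absolute deviation $\sup_{C \in \mathcal{C}_j} |\eta_N(C) - \eta(C)|$ and compare it with $2^{j-1} t_N$.

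Second, for a single $C \in \mathcal{C}_j$, Bernstein's inequality applied to $N\eta_N(C) \sim \mathrm{Binom}(N, \eta(C))$, whose variance is bounded by $N\eta(C) \leq N \cdot 2^j t_N$, yields
$$
\eta\!\left(|\eta_N(C) - \eta(C)| > \varepsilon\, \eta(C)\right)
\;\leq\; 2\exp\!\left(-c\,\varepsilon^2 N \eta(C)\right)
\;\leq\; 2\exp\!\left(-c\,\varepsilon^2 \cdot 2^{j-1} K v \log N\right)
$$
for a numerical constant $c$. To pass from a fixed $C$ to the uniform supremum, I would invoke the Vapnik--\v{C}ervonenkis shattering bound: on any sample of size $N$, a class of VC index $v$ realizes at most $O(N^v)$ distinct indicator patterns, so after the usual symmetrization step the supremum over $\mathcal{C}_j$ is controlled by a union bound over $O(N^v)$ effective sets (equivalently, by Talagrand's concentration together with a Dudley entropy integral $\int \sqrt{v\log(1/u)}\,du$).

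Combining these two ingredients gives, for each shell,
$$
\eta\!\left[\sup_{C \in \mathcal{C}_j} \tfrac{|\eta_N(C) - \eta(C)|}{\eta(C)} > \varepsilon\right]
\;\leq\; C' N^{v} \exp\!\left(-c\,\varepsilon^2 \cdot 2^{j-1} K v \log N\right)
\;=\; C' N^{\,v(1 - c\varepsilon^2 \cdot 2^{j-1} K)}.
$$
Choosing $K = K(\varepsilon,\delta)$ large enough that $c\varepsilon^2 K/2 \geq 2+\delta$ forces each shell's contribution to be at most $C' N^{-(1+\delta)v}$, and the geometric growth of $2^{j-1}$ in the exponent makes the sum over $j \geq 1$ a geometric series of the same order, giving the claimed $O(N^{-(1+\delta)v})$. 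The main obstacle is precisely this passage from a pointwise Bernstein bound to a uniform supremum with the \emph{correct} exponent $v$: a naive symmetrization plus Hoeffding bound would lose logarithmic factors and miss the target rate, which is why Alexander's localization refinement (or a modern Talagrand-plus-chaining argument) is needed to obtain a bound that holds uniformly over all scales $2^{j-1} t_N$ down to the optimal threshold $Kv\log N/N$.
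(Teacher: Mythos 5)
The paper does not prove this proposition at all: it is imported verbatim from Alexander (1985) as a known result on weighted empirical processes, so any argument you give is necessarily ``a different route'' from the paper's (which is a citation). Your architecture --- dyadic peeling over the scales $2^{j-1}t_N \leq \eta(C) < 2^j t_N$, a variance-sensitive Bernstein bound per set, a VC/Sauer bound of order $N^v$ to pass to the supremum, then choosing $K$ so that the first shell already contributes $O(N^{-(1+\delta)v})$ and the remaining shells decay super-geometrically --- is indeed the standard skeleton behind Alexander-type ratio bounds, and your bookkeeping of the exponents ($N^{v}\cdot N^{-c\varepsilon^2 2^{j-1}Kv}$, summed over the $O(\log N)$ shells) is correct. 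The one genuinely soft step, which you flag but do not resolve, is the middle one: you cannot literally ``union bound over $O(N^v)$ effective sets'' and then apply the per-set Bernstein inequality, because the traces realized by $\mathcal C$ are functions of the random sample, and symmetrization of a ratio whose denominator is the \emph{population} measure $\eta(C)$ is exactly the delicate point --- the additive VC--Hoeffding inequality is not merely lossy here but vacuous, since the target deviation $\varepsilon\,2^{j-1}t_N \asymp \log N/N$ is far below the $N^{-1/2}$ scale. What closes this step is Vapnik's relative-deviation inequality for VC classes, $\P\bigl[\sup_C |\eta_N(C)-\eta(C)|/\sqrt{\eta(C)} > s\bigr] \leq c_1 S_{\mathcal C}(2N)e^{-c_2 N s^2}$ (or, equivalently, Talagrand's inequality localized at variance level $\eta(C)$, as in Gin\'e--Koltchinskii). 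Once you invoke that inequality, note that your peeling becomes dispensable: applying it with $s = \varepsilon\sqrt{t_N}$ directly yields $c_1(2eN/v)^v N^{-c_2\varepsilon^2 Kv}$ for the whole class $\{\eta(C)\geq t_N\}$, since the worst case sits at the threshold and larger $\eta(C)$ only helps; choosing $K$ with $c_2\varepsilon^2 K \geq 2+\delta$ finishes the proof in one line. So your sketch is sound in outline, but as written it rests on exactly the ingredient (a uniform ratio-type deviation bound) that the proposition itself encapsulates, which is presumably why the authors chose to quote Alexander rather than reprove it.
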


Assume that $\mu$ is the discrete uniform measure on a point cloud $X_N=\{x_1, \dots, x_N\}$ which has been sampled from  $\eta$, thus $\mu=\eta_N$. Assume moreover that $\eta$ satisfies an $(a',b)$-standard assumption ($r_0 =0$). Let  $r_0$ be a positive function of $N$  chosen further.  For any $r>r_0(N) $ and any $y\in   \mathbb{X}_\mu$:
\begin{eqnarray}
  \inf_{y \in \mathbb X _\mu} \mu (B(y,r))  &= &   \inf_{y \in \mathbb X_\mu} \eta_N(B(y,r))  \nonumber  \\
 &= &    \inf_{y \in \mathbb X _\mu}  \left\{ \eta(B(y,r))  \left[ 1 -  \frac{ \eta(B(y,r))  -  \eta_N(B(y,r)) }{  \eta(B(y,r))} \right]  \right\} \nonumber \\
 &\geq& ( 1 \wedge a' r^b )   \inf_{y \in \mathbb X _\mu}  \left\{ 1   -  \sup _{x \in  \mathbb{X}} \left|  \frac{ \eta(B(x,r)) - \eta_N (B(x,r)) }{\eta(B(x,r))}  \right| \right\}  \nonumber   \\
 &\geq& ( 1 \wedge a' r^b )   \inf_{y \in \mathbb X _\mu}  \left\{ 1   -  \sup _{x \in  \mathbb{X}, r' \geq r_0(N)} \left|  \frac{ \eta(B(x,r')) - \eta_N (B(x,r')) }{\eta(B(x,r'))}  \right| \right\}  \label{ineqabrN}
\end{eqnarray}
Assume that the set of balls in $(\mathbb{X}, \rho)$ has a finite VC-dimension $v$. For instance, in $\R^d$, the VC-dimension of balls is d+1. Under this assumption we apply Alexander's Proposition with (for instance) $\delta =1 $ and $\varepsilon = 1/2$.  Let $K>0$ such that (\ref{eq:Alex}) is satisfied. Then, by setting
$$r_0(N) :=  \left( \frac{ Kv }{a'}\frac {\log N}  N \right)^{1/b} \, ,$$  
we finally obtain using (\ref{eq:Alex}) and (\ref{ineqabrN}) that
$$ \eta \left[
 \inf_{y \in \mathbb X _\mu, \, r \geq r_N} \mu (B(y,r)) \geq 
 1 \wedge \frac {a'} 2 r^b     
\right]
 = O(N ^{-2 v} ) .$$
In this quite  general context, we see that  by taking $r_0 $ of the order of $\left( \frac {\log N}  N \right)^{1/b}$, for large  values of $N$ the $(a,b,r_0)$-standard assumption is satisfied with high probability (in $\eta$).

\section{Robustness to Outliers}
\label{sec:robustness}

The average landscape method is insensitive to outliers, as can be seen by the stability result of Theorem \ref{thm:stabW}.
The probability mass of an outlier gives a minimal contribution to the Wasserstein distance on the
right hand side of the inequality.\\
For example, suppose that $X_N=\{x_1, \dots, x_N \}$ is a random sample from the
unit circle $\mathbb{S}^2$, and let
$Y_N=X_N\backslash \{x_1\} \cup \{(0,0)\} $. See Figure \ref{fig:exCircle}.  The
landscapes $\lambda_{{X_N}}$ and $\lambda_{{Y_N}}$ are very different
because of the presence of the outlier $(0,0)$. On the other hand, the average
landscapes constructed by multiple subsamples of $m<N$ points from $X_N$ and
$Y_N$ are close to each other. Formally, let $\mu$ be the discrete uniform
measure that put mass $1/N$ on each point of $X_N$ and similarly let $\nu$ be
the discrete uniform measure on $Y_N$. 
The 1st Wasserstein distance between the
two measure is 1/N and, according to Theorem \ref{thm:stabW},  
the difference between the average landscapes is
$
\left\Vert \mathbb{E}_{\Psi_{\mu}^m}[\lambda_X] -
\mathbb{E}_{\Psi_{\nu}^m}[\lambda_Y] \right\Vert_\infty \leq  \frac{m}{N}.
$

\begin{figure}[!ht]
\centering
\includegraphics[width=5.4in]{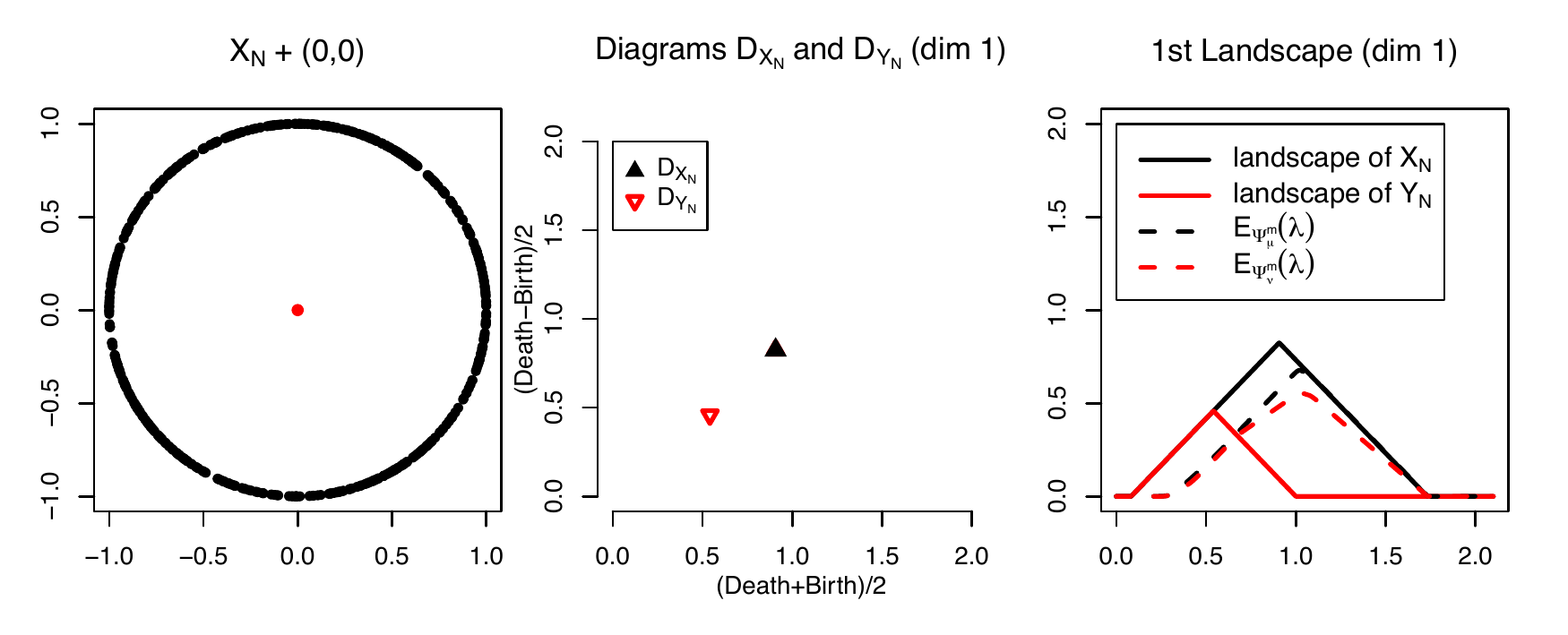}
\caption{Left: $X_N$ is the set of $N=500$ points on the unit circle; $Y_N=X_N\backslash \{x_1\} \cup \{(0,0)\}$. Middle: persistence diagrams (dim 1) of the VR filtrations on $X_N$ and $Y_N$, in the same plot, with different symbols. Right: landscapes of $X_N$, $Y_N$ and the corresponding average landscapes constructed by subsampling $m=100$ points from the two sets, for $n=30$ times. }
\label{fig:exCircle}
\end{figure}

More formally, we can show that the average landscape
$\overline{\lambda_n^m}$ can be more accurate
than the closest subsample method
when there are outliers.
In fact,
$\overline{\lambda_n^m}$  can even be more accurate
than the landscape corresponding to a large sample of $N$ points.

Suppose that the large, given point cloud $X_N=\{x_1, \dots, x_N \}$ 
has a small fraction of outliers.
Specifically,
$X_N = {\cal G} \bigcup {\cal B}$
where
${\cal G} = \{x_1,\ldots, x_G\}$
are the good observations
and ${\cal B} = \{y_1,\ldots, y_B\}$
are the outliers (bad observations).
Let $G = | {\cal G}|$,
$B = | {\cal B}|$ so that $N = G + B$
and let
$\epsilon = B/N$
which we assume is small but positive.
Our target is the landscape based on the non-outliers,
namely,
$\lambda_{\mathcal{G}}$.
The presence of the outliers means that
$\lambda_{X_N} \neq \lambda_G$.
Let 
$
\beta = \inf_S ||\lambda_S - \lambda_{\mathcal{G}}||_\infty > \delta,
$
for some $\delta>0$,
where the infimum is over all subsets that contain at least one outlier.
Thus, $\beta$
denotes the minimal bias due to the outliers.
We consider three estimators:
\begin{align*}
\lambda_{X_N} &: {\rm landscape\ from\ full\ given\ sample\ } X_N\\
\overline{\lambda_n^m} &: {\rm average \ landscape\ from\ } n {\rm \ subsamples\ of\ size\ } m\\
\hat{\lambda_n^m}  &: {\rm landscape\ of\ closest\  subsample,\  from\ } n {\rm \ subsamples\ of\ size\ } m.
\end{align*}

The last two estimators are defined in Section 3, and are constructed using $n$ independent
samples of size $m$ from the discrete uniform measure that puts mass $1/N$ on each point of $X_N$.

\begin{proposition}
If $\epsilon = o(1/n)$,
then, for large enough $n$ and $m$,
\begin{equation}
\mathbb{E}\Vert \overline{\lambda_n^m} - \lambda_{\mathcal{G}}\Vert_\infty <
\mathbb{E}\Vert\lambda_{X_N} - \lambda_{\mathcal{G}} \Vert_\infty.
\end{equation}
In addition, if
$n m \epsilon\to \infty$ then
\begin{equation}
\mathbb{P}\left[ \mathbb{E}\Vert \overline{\lambda_n^m}- \lambda_{\mathcal{G}}\Vert_\infty < 
\Vert \hat{\lambda_n^m} -\lambda_{\mathcal{G}}\Vert_\infty \right] \to 1.
\end{equation}
\end{proposition}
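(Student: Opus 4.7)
Landscapes of subsets of a space of diameter at most $T/2$ are uniformly bounded by some constant $M = M(T)$, and by the very definition of $\beta$ any $S \subseteq X_N$ with $S \cap \mathcal{B} \neq \emptyset$ satisfies $\|\lambda_S - \lambda_\mathcal{G}\|_\infty \geq \beta > \delta$. Since $X_N \supseteq \mathcal{B}$, one has $\mathbb{E}\|\lambda_{X_N} - \lambda_\mathcal{G}\|_\infty = \|\lambda_{X_N} - \lambda_\mathcal{G}\|_\infty \geq \beta$, so the first claim reduces to showing $\mathbb{E}\|\overline{\lambda_n^m} - \lambda_\mathcal{G}\|_\infty < \beta$. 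For this, I would apply the triangle inequality to pull the expectation inside and then condition on $\mathcal{A}_1 = \{S_1^m \cap \mathcal{B} = \emptyset\}$:
\[
\mathbb{E}\|\overline{\lambda_n^m} - \lambda_\mathcal{G}\|_\infty \;\leq\; \mathbb{E}\|\lambda_{S_1^m} - \lambda_\mathcal{G}\|_\infty \;\leq\; M\,\mathbb{P}(\mathcal{A}_1^c) + \mathbb{E}\bigl[H(S_1^m,\mathcal{G})\,\mathbbm{1}_{\mathcal{A}_1}\bigr].
\]
A union bound gives $\mathbb{P}(\mathcal{A}_1^c) \leq m\epsilon$, and conditional on $\mathcal{A}_1$ the subsample $S_1^m$ is uniform on $\mathcal{G}$, so by landscape stability and Lemma \ref{lem:covering} (with $r_0$ supplied by Appendix \ref{sec:abtrunc}) the second summand is $o(1)$ as $m \to \infty$. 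Under $\epsilon = o(1/n)$, choosing $n$ and $m$ large with $m$ moderate relative to $1/\epsilon$ drives both summands to $0$, putting the LHS below $\beta$.

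For the second claim I would exploit the separation $\eta^* := \max_{b \in \mathcal{B}} d(b, \mathcal{G}) > 0$: any outlier-free $S$ satisfies $d(b, S) \geq d(b, \mathcal{G})$ for each $b \in \mathcal{B}$ and hence $H(S, X_N) \geq \eta^*$, while subsamples that capture the critical outliers achieve $H(S, X_N) \leq H(S \cap \mathcal{G}, \mathcal{G}) = o(1)$ by Lemma \ref{lem:covering}. Consequently, on the event $\mathcal{E}_n = \{\exists i \leq n : S_i^m \cap \mathcal{B} \neq \emptyset\}$, the argmin $\widehat{C_n^m}$ of \eqref{eq:closestSample} is itself outlier-containing with high probability, forcing $\|\hat{\lambda_n^m} - \lambda_\mathcal{G}\|_\infty \geq \beta$. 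The probability $\mathbb{P}(\mathcal{E}_n^c) = (1 - \mathbb{P}(\mathcal{A}_1^c))^n$ tends to $0$ under $nm\epsilon \to \infty$ since $\mathbb{P}(\mathcal{A}_1^c) \asymp m\epsilon$. Combined with the claim-1 bound $\mathbb{E}\|\overline{\lambda_n^m} - \lambda_\mathcal{G}\|_\infty < \beta$, this yields the desired probability convergence.

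The main obstacle is the outlier-attraction step in claim 2: formally showing that, on $\mathcal{E}_n$, the $H(\cdot, X_N)$-argmin is outlier-containing rather than outlier-free. This requires Lemma \ref{lem:covering}'s Hausdorff rate on $\mathcal{G}$ to lie strictly below $\eta^*$, together with a union bound across the $n$ subsamples that remains compatible with $nm\epsilon \to \infty$. A secondary subtlety is reconciling this uniform control with the mildly mismatched scaling $\epsilon = o(1/n)$ of the first claim, which only gives $n\epsilon \to 0$ and leaves $m\epsilon$ potentially large relative to the Hausdorff bias; the argument works cleanly in the small-$B$ regime where both scalings can be made simultaneous.
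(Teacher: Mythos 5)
Your plan follows essentially the same route as the paper's proof: split subsamples into clean and infected ones, bound the infected contribution by the uniform bound on landscapes times the infection probability $1-(1-\epsilon)^m\approx m\epsilon$, control the clean contribution via Hausdorff convergence (Lemma~\ref{lem:covering}), use $\beta$ to lower-bound both $\Vert\lambda_{X_N}-\lambda_{\mathcal G}\Vert_\infty$ and the landscape of any infected subsample, and invoke $1-(1-\epsilon)^{nm}\to 1$ for the second claim. The two points you flag as obstacles --- rigorously showing the Hausdorff-argmin is infected, and the tension between $\epsilon=o(1/n)$ and needing $m\epsilon\to 0$ --- are handled no more carefully in the paper itself (the former is asserted in one line, the latter is implicit in the claim that $\pi=1-(1-\epsilon)^m$ is eventually small), so your proposal is at least as complete as the published argument.
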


\begin{proof}
Say that a subsample is clean if it contains no outliers
and that it is infected if it contains at least one outlier.
Let $S_1,\ldots,S_n$ be the subsamples of $X_N$ of size $m$.
Let $I = \{i:\ S_i\ {\rm is\ infected}\}$ and
$C = \{i:\ S_i\ {\rm is\ clean}\}$.
Then
$$
\overline{\lambda_n^m} = \frac{n_0}{n}\lambda_0 +  \frac{n_1}{n}\lambda_1
$$
where
$n_0$ is the number of clean subsamples,
$n_1 = n-n_0$
is the number of infected subsamples,
$\lambda_0 = (1/n_0)\sum_{i\in C}\lambda_{S_i}$ and
$\lambda_1 = (1/n_1)\sum_{i\in I}\lambda_{S_i}$.
Hence,
\begin{align*}
\Vert \overline{\lambda_n^m} - \lambda_{\mathcal{G}} \Vert_\infty &\leq
\frac{n_0}{n} \Vert\lambda_0 -\lambda_{\mathcal{G}} \Vert_\infty + \frac{n_1}{n} \Vert\lambda_1
-\lambda_{\mathcal{G}}\Vert_\infty\\
& \leq
\frac{n_0}{n}\Vert\lambda_0 -\lambda_{\mathcal{G}}\Vert_\infty + \frac{T n_1}{2n}.
\end{align*}

A subsample is clean with
probability $(1-\epsilon)^m$.
Thus,
$n_0 \sim {\rm Binomial}(n,(1-\epsilon)^m)$ and
$n_1 \sim {\rm Binomial}(n,1-(1-\epsilon)^m)$.
Let $\pi = 1-(1-\epsilon)^m$.
By Hoeffding's inequality
$$
\mathbb{P}\left(\frac{T n_1}{2n} > \frac{\beta}{2}\right) =
\mathbb{P}\left(\frac{T n_1}{2n}-\frac{T\pi}{2} > \frac{\beta}{2}-\frac{T\pi}{2}\right)
 \leq
\exp\left( - 2n \left(\frac{\beta}{T}-\pi\right)^2\right).
$$
Since $\epsilon = o(1/n)$,
we eventually have that
$$
\pi=1 - (1-\epsilon)^m < \frac{\beta}{T} - \sqrt{\frac{\log n}{2n}},
$$
which implies that
$\mathbb{P}\left(\frac{T n_1}{2n} > \frac{\beta}{2}\right)< 1/n$.
So, except on a set of probability tending to 0,
$$
\Vert \overline{\lambda_n^m} - \lambda_{\mathcal{G}} \Vert_\infty \leq
\frac{n_0}{n}\Vert\lambda_0 -\lambda_{\mathcal{G}}\Vert_\infty + \frac{\beta}{2} \leq
\Vert\lambda_0 -\lambda_{\mathcal{G}}\Vert_\infty + \frac{\beta}{2}
$$
and thus, as soon as $n, m$ and $N$ are large enough,
$$
\mathbb{E}\Vert \overline{\lambda_n^m}  - \lambda_{\mathcal{G}} \Vert_\infty \leq
\frac{\beta}{2} + \frac{\beta}{2} =
\beta \leq \Vert\lambda_{X_N} - \lambda_{\mathcal{G}}\Vert_\infty.
$$
This proves the first claim.
To prove the second claim, note that
the probability that at least one subsample is infected is
$1-(1-\epsilon)^{nm} \sim 1 - e^{-\epsilon nm} \to 1$.
So with probabilty tending to 1,
there will be an infected subsample.
This subsample will minimize
$H(X,S_j)$ and the landscape based on this selected
subsample will have a bias of order $\beta$.
\end{proof}

In practice, we can increase the robustness further, by
using filtered subsampling.
This can be done using distance to $k$-th nearest neighbor or
using a kernel density estimator.
For example,
let
$$
\hat p_h(x) = \frac{1}{N} \sum_{j=1}^N K\left(\frac{||x-X_i||}{h}\right)
$$
be a kernel density estimator
with bandwidth $h$ and kernel $K$.
Suppose that all subsamples are chosen from the filtered set
${\cal F} = \{X_i:\ \hat p_h(X_i) > t\}$.
Suppose that the good observations ${\cal G}$ are sampled from a
distribution on a set $A\subset [0,1]^d $ satisfying the $(a,b)$-standard condition
with $b < d$, $a>0$
and that ${\cal B}$ consists of $B$ observations sampled from a uniform
on $[0,1]^d$.
For any $x\in A$,
$$
\mathbb{E}[\hat p_h(x)] \approx \frac{a h^b}{h^d}
$$
and for any outlier $X_i$ we have
(for $h$ small enough) that
$\hat p_h(X_i) = 1/(nh^d)$.
Hence, if we choose $t$ such that
$$
\frac{1}{nh^d} < t < \frac{a }{h^{d-b}}
$$
then
${\cal F}={\cal G}$
with high probability.

\end{document}